\begin{document}
\title{A mathematical programming based characterization of Nash equilibria of 
some constrained stochastic games\footnote[1]{A portion of Section \ref{IND_model} (two player case) has been presented in the 8th International ISDG workshop at University of Padova, Italy on 21-23 July, 2011.}}
 
\titlerunning{Constrained stochastic games}

\author
{Vikas Vikram Singh         
\and N. Hemachandra
}

\institute
{Vikas Vikram Singh
\at
Industrial Engineering and Operations Research, Indian Institute of Technology Bombay,
Mumbai 400076, India \\
\email{vikas$_{-}$singh@iitb.ac.in}           
\and
N. Hemachandra 
\at
Industrial Engineering and Operations Research, Indian Institute of Technology Bombay,
Mumbai 400076, India \\
\email{nh@iitb.ac.in}  
}

\date{Received: date / Accepted: date}

\maketitle
\begin{abstract}
We consider two classes of constrained finite state-action stochastic
games. First, we consider a two player nonzero sum single controller
constrained stochastic game with both average and discounted
cost criterion. We consider the same type of constraints as in \cite{Altman},
i.e., player 1 has subscription based constraints and player 2, who
controls the transition probabilities, has realization based constraints
which can also depend on the strategies of player 1. Next, we consider a
$N$-player nonzero sum constrained stochastic game with independent
state processes where each player has average cost criterion as
discussed in \cite{Altman3}. We show that the stationary Nash equilibria of both
classes of constrained games, which exists under strong Slater and
irreducibility conditions \cite{Shwartz}, \cite{Altman3}, has one to one correspondence with
global minima of certain mathematical programs. In the single controller
game if the constraints of player 2 do not depend on the strategies of
the player 1, then the mathematical program reduces to the non-convex
quadratic program. In two player independent state processes stochastic
game if the constraints of a player do not depend on the strategies of
 another player, then the mathematical program reduces to a
non-convex quadratic program. Computational algorithms for finding
global minima of non-convex quadratic program exist \cite{Giannessi}, \cite{Jing} and hence,
one can compute Nash equilibria of these constrained stochastic games.
Our results generalize some existing results for zero sum games \cite{Altman}, \cite{Hordijk2}, \cite{Miller}.
\end{abstract}
\keywords{Constrained stochastic game, Occupation measures, Single controller game, Decentralized stochastic game, Nash equilibrium,  Mathematical program.
}
\subclass{91A10, 91A15, 90C05, 90C20, 90C26.}

\section{Introduction}
It is well known that there is a substantial relationship between game theory and mathematical programming. While it is well  known that equilibrium strategies in two player zero sum matrix games are related to optimal points of certain linear programs, in 1964, Mangasarian and Stone \cite{Stone} have shown that the Nash equilibria of any two player bimatrix game can be obtained from the global maxima of one quadratic program and this approach can be generalized in case of any finite number of players. Later Filar et al. \cite{Thuijsman}, generalized this idea to the infinite horizon stochastic game with finite state space and finite action spaces of all the players.
It has been shown that the stationary Nash equilibria of any $N$-player stochastic game with discounted criterion are in one to one correspondence with the global minima of a certain mathematical program  \cite{Thuijsman}, \cite{Filar}; so, Nash equilibria of such a stochastic game can be computed via the global minima of one mathematical program. 
The stochastic games described in \cite{Thuijsman}, \cite{Filar} can be viewed as centralized stochastic games. In such centralized stochastic games all the players jointly control a single Markov chain and all the players have complete information of the Markov chain's state and for taking decision at any time $t$ each player has
information of all the actions previously taken by the players. The review article \cite{Raghavan} summarizes various algorithmic aspects of zero sum stochastic games along with algorithms for nonzero sum stochastic games with special structure (single controller, etc.). In particular, two player zero sum single controller stochastic game can be solved by a linear program 
\cite{Parth}, \cite{Filar}, \cite{Vrieze2} and the Nash equilibria of the nonzero sum single controller stochastic game can be obtained from the global minima of a quadratic program \cite{Filar_2}.

Since the seminal work of Lloyd S. Shapley \cite{Shapley}, stochastic games have come to constitute
an important class of models that can capture game theoretic issues among the decision
makers involved, apart from accounting for random evolution of the system. The edited volume by Neyman and Sorin \cite{Neyman} has a nice collection of many articles on stochastic games and their applications. The book by Filar and Vrieze \cite{Filar} presents stochastic games as a natural multi-player generalization of (single player) Markov decision processes and their applications. 
Constrained stochastic games are realistic because they can capture bounds on consumption of resources, but, are also difficult to analyze. In \cite{Shwartz} $N$-player centralized constrained stochastic games with both discounted and average cost criterion
with finite state and finite action spaces are considered and it is shown that there exists a stationary
Nash equilibrium under strong Slater condition (irreducibility assumption is also needed in average case). 
The existence of Nash equilibrium for constrained stochastic games 
when the state space is countable and action spaces are compact metric space is discussed in \cite{Hernandez}.
The characterization of Nash equilibria for general constrained stochastic games  via some mathematical program is not known. 
To the best of our knowledge there are only some special classes  of constrained stochastic games which can be solved as linear programs.  We give a brief description of all these classes here.  
The two player zero sum single controller constrained stochastic game with total expected reward criterion
and expected average reward criterion is considered in \cite{Hordijk1}, \cite{Hordijk2} respectively. In both 
\cite{Hordijk1}, \cite{Hordijk2} only the player who controls the transition probabilities has 
constraints on his expected rewards and these rewards do not depend on the strategies of the other player. Nash equilibrium of such stochastic games can be computed from optimal solutions of linear programs. 
Altman, et al., \cite{Altman} considered the zero sum constrained stochastic game with 
discounted cost criterion where both the players have constraints. The player who controls the 
transition probabilities has constraints on his expected discounted costs as similar in 
\cite{Hordijk1}, \cite{Hordijk2} and other player has subscription based constraints. This class of games also can be solved by linear programs \cite{Altman}.
 
Apart from the centralized stochastic games as discussed above, some 
decentralized stochastic games are being considered in the literature recently \cite{Miller}, \cite{Altman3}, \cite{Altman4}. 
In decentralized stochastic games each player independently controls his own Markov chain based on his state and actions. 
In \cite{Altman3}, a $N$-player decentralized constrained stochastic game with average cost criterion 
is considered  and it is shown that the Nash equilibrium for these games exists in 
stationary strategies under the irreducibility and strong Slater condition. 
In these games each player controls his own  Markov chain and the constraints of each player 
 depend also on the strategies of all the players. The application of these games to modeling of wireless network is described in \cite{Miller}, \cite{Altman3}, \cite{Altman4}. 
Two player zero sum game of this class where the constraints of each player do not depend on the other 
player's strategies is considered in \cite{Miller}. These games, with both unichain and multichain structure on the state processes of both the players, can be solved by linear programs. 
 
In this paper we consider two different classes of constrained stochastic games. First, we consider a special class of two player nonzero sum centralized constrained stochastic games which is a single controller constrained stochastic game with both average and discounted cost criterion. We then consider a $N$-player 
nonzero sum constrained stochastic game with independent state processes where all the players use average cost criterion as discussed in \cite{Altman3}. The summary of our results are:
\begin{enumerate}
\item We consider a two player nonzero sum single controller constrained stochastic game with both average and discounted   cost criterion,  a special class of centralized constrained stochastic games, with the 
same type of constraints as in \cite{Altman}, i.e., player 1 has subscription
based constraints and player 2, who controls the transition probabilities, has realization based constraints. Unlike the situation in \cite{Altman} and \cite{Hordijk2}
we consider the case where realization based constraints of player 2 depend on the strategies of both the 
players. It follows from \cite{Shwartz} that there exists a stationary Nash equilibrium under  
strong Slater condition (irreducibility assumption is also needed in average case). 
We show that the Nash equilibria of this constrained stochastic game can be obtained from the 
global minima of one mathematical program. 
The converse statement is also true, i.e., from the stationary Nash equilibrium 
of these games we can construct a point which 
is a global minimum of the corresponding mathematical program.
\item  If the constraints of player 2 do not depend on the strategies of player 1, then the mathematical program 
reduces to the non-convex quadratic program. For zero sum case the linear programs given in \cite{Altman}, \cite{Hordijk2} can be recovered from our quadratic program. 
\item We show that the stationary Nash equilibria of $N$-player nonzero sum constrained stochastic game with independent state processes \cite{Altman3} can be obtained from the global minima of a certain mathematical program. 
The converse statement is also true, i.e., the stationary Nash equilibrium of these games,  which exists under strong Slater and irreducibility conditions \cite{Altman3}, corresponds to a point which is a global minimum of the  corresponding mathematical program.  
\item In two player constrained stochastic game with independent state processes case, if the constraints of each player  do not depend on the other player's strategies, then the corresponding  mathematical program reduces to the non-convex quadratic program. The linear program as given in \cite{Miller} for zero sum game can be obtained as a special case of our quadratic program.
\end{enumerate}
To derive mathematical programs for both constrained stochastic games we use the same approach, which is via best response linear programs. We use the fact that the best response of each player against the fixed strategies of other players can be obtained by solving a constrained Markov decision model, which, in turn, can be obtained by solving a linear program \cite{Altman2}. In both the cases due to some special structure we are able to put all primal-dual pair of linear programs (one pair for each player) together to form one mathematical program whose objective function is nonnegative at all feasible points. As the linear program which gives the optimal strategy in a constrained Markov decision model is given in terms of occupation measure, our mathematical programs are in terms of these occupation measures. The Nash equilibrium strategy can be recovered from occupation measure by a known transformation~\cite{Altman2}.   

There are some methods available for solving non-convex quadratic programming problem 
\cite{Giannessi}, \cite{Cottle}, \cite{Burdet}. The algorithm given in \cite{Burdet} 
is based on complete enumeration of the faces of the polyhedron and therefore it is
not very efficient while the cutting plane method of \cite{Cottle} seems to be problematic \cite{Zwart}.  
The algorithm given in \cite{Giannessi} to solve quadratic programs terminates in a finite number of steps. 
We note that the  algorithm of \cite{Giannessi} assumes that quadratic program has a global minimum 
and this condition is satisfied in our settings.  In \cite{Jing}, one more algorithm based on linear programming with complementarity constraints approach is given to solve a  non-convex quadratic program. This algorithm does not assume the quadratic program to be bounded below on feasible set.
(If quadratic program is not bounded below, then the algorithm given in \cite{Jing} computes a feasible ray on which the quadratic program is unbounded;  otherwise, it finds an optimal solution in finite number of steps). But, in our case the quadratic programs are bounded below on feasible set and hence the algorithm given in \cite{Giannessi} is applicable to our settings. One can also attempt to use general purpose nonlinear solvers to solve these non-convex  quadratic programs, but convergence to global minima may not be guaranteed.

We now describe the structure of the rest of our paper. Section \ref{SGCT} contains the two player nonzero sum single controller constrained stochastic game with both average and discounted cost criteria and its mathematical programming formulation. 
Section \ref{IND_model} contains $N$-player constrained stochastic game with independent state processes with average cost criterion and its mathematical programming formulation.

\section{Single controller constrained stochastic game}\label{SGCT}
We consider two player nonzero sum single controller constrained stochastic games
with both  average  and discounted cost criterion. We assume that player 2 controls the Markov chain. As similar to \cite{Altman},  player 1 has subscription based constraints and player 2 has realization based constraints but unlike the case in 
\cite{Altman}, \cite{Hordijk2} the constraints of player 2 can also depend on the strategies of player 1. 
This class of stochastic game is described by the following objects:   
\begin{enumerate}
\item[(i)]  $S$ is finite state space of the game. 
The generic element of $S$ is denoted by~$s$.

\item[(ii)] $\gamma=\left(\gamma(1),\gamma(2),\cdots,\gamma\left(|S|\right)\right)$ is a probability distribution over $S$
according to which initial state 
is chosen.

\item[(iii)] $A^{i}$ is the finite action set of player $i$, $i=1,2$, let $A^i(s)$ denotes the set of 
actions available to player $i$ when the state is at $s$, where
$A^i=\bigcup_{s\in S} A^i(s)$.
\item[(iv)] Define, $\mathcal{K}=\left\{(s,a^1,a^2):s\in S,a^1\in A^1(s),a^2\in A^2(s)\right\}$ and for $i=1,2$, $\mathcal{K}^i=\left\{(s,a^i):s\in S, a^i\in A^i(s)\right\}$.
\item[(v)]  $c^i:\mathcal{K} \rightarrow \mathbb{R}$ is immediate cost of player $i$, $i=1,2$. 
  Specifically, $c^i(s,a^1, a^2)$ is the immediate cost incurred by player $i$, $i=1,2$,  when state is $s\in S$ 
and actions chosen by player 1 and player 2 are $a^1 \in A^1(s)$ and  $a^2\in A^2(s)$ respectively. Player $i$
wants to minimize the expected cost involving $c^i(\cdot)$, $i=1,2$. 

\item[(vi)] $d_{sub}^{1,k}:\mathcal{K}^1\rightarrow \mathbb{R}$ is subscription type cost of  player 1.
$d^{1,k}_{sub}(s,a^1)$ denotes subscription cost 
which player 1 has to pay for using action 
$a^1$ at state $s$ for $k$th service, $k=1,2,\cdots,n_1$.

\item[(vii)] $d^{2,l}:\mathcal{K} \rightarrow \mathbb{R}$ is immediate cost of player 2.
These $d^{2,l}(\cdot)$ are involved in the $l$th, $l=1,2,\cdots,n_2$, constraint on expected cost of player 2. 

\item[(viii)] Define, $\wp(M)$ as set of all probability measures over set $M$. $p:\mathcal{K}^2\rightarrow~\wp(S)$ is transition probability describing the dynamics of the game, where  
 $p(s'|s,a^2)$ is a probability of going to state $s'$ from state $s$ when player 2 
chooses action  $a^2\in A^2(s)$. 
We recall that the game is controlled by only player 2.  

\item[(ix)] $\xi^1=\left(\xi_1^1,\xi_2^1,\cdots,\xi_{n_1}^1\right)^T$, $\xi^2=\left(\xi_1^2,\xi_2^2,\cdots,\xi_{n_2}^2\right)^T$ 
denote the vectors defining the given bounds of the constraints on both the 
players.

\end{enumerate} 

The game dynamics are as follows.  Initially, at time $t=0$, the state of the game is $s$ which is chosen according to initial distribution $\gamma$,
player 1 chooses an action $a^{1}\in A^{1}(s)$ and player 2 chooses an action
$a^{2}\in A^{2}(s)$ independent of each other. Player 1 receives
an immediate cost of $c^{1}(s,a^{1},a^{2})$ and player 2 receives $c^{2}(s,a^{1},a^{2})$. Apart from this player 2 receives another immediate costs $\{d^{2,l}(s,a^1,a^2)\}$, $l=1,2,\cdots,n_2$, 
which are involved in the expected cost functionals of player 2  that are constrained by specified bounds $\{\xi_l^2\}$, $l=1,2,\cdots,n_2$.
Now, the state of the game switches to a new state $\hat{s}\in S$  at time $t=1$ with  probability $p(\hat{s}|s,a^2)$.  
At time $t=1$, in state $\hat{s}$, player 1 chooses an action $\hat{a}^1\in A^1(\hat{s})$ and player 2 chooses an action $\hat{a}^2\in A^2(\hat{s})$ and receives immediate  cost $c^1(\hat{s},\hat{a}^1,\hat{a}^2)$ and $c^2(\hat{s},\hat{a}^1,\hat{a}^2)$ respectively, player 2 also receives immediate costs $\{d^{2,l}(\hat{s},\hat{a}^1,\hat{a}^2)\}$, $l=1,2,\cdots,n_2$. The next state of the game is $\tilde{s}\in S$ with probability 
$p(\tilde{s}|\hat{s},\hat{a}^2)$.
The same thing repeats at state $\tilde{s}$ and play continues for infinite time horizon.

While transition probabilities depend only on the present state and action used, action that are used can depend on `past', as in history dependent strategies.
Define a history at time $t$ as 
$h_t=(s_0,a_0^1,a_0^2,s_1,a_1^1,a_1^2,\cdots,s_{t-1},a_{t-1}^1,a_{t-1}^2,s_t)$
where $s_t\in S$, $a_t^i\in A^i(s_t)$, $i=1,2$, $t=0,1,2,\cdots$. Let $H_t$ denote the set of all possible histories of length $t$.
A decision rule $f_t:H_t\rightarrow \wp(A^1(s_t))$ (resp., $g_t:H_t\rightarrow \wp(A^2(s_t))$) 
of player 1 (resp., player 2) at time $t$ is a 
function that assigns to any history of length $t$, a probability measure over action set of player 1 (resp., player 2).
This means that under decision rule $f_t$ (resp., $g_t$), player 1 (resp., player 2) chooses action $a^1$ (resp., $a^2$) with probability $f_t(h_t,a^1)$ (resp., $g_t(h_t,a^2)$). 
The sequence of decision rules is called the strategy of the player.
$f^h= (f_0,f_1,\cdots,f_t,\cdots)$ 
and $g^h=(g_0,g_1,\cdots,g_t,\cdots)$ denote the strategies of player 1 and
player 2 respectively and are called
history dependent (behavioral) strategies.

 Let $F$ and $G$ denote the set of all history dependent strategies 
of player 1 and player 2 respectively.  
These strategies are called Markovian if at every decision epoch the decision rule depends 
only on the current state but the decision rule can differ at every epoch. 
A stationary strategy is a Markovian strategy which does not depend on the time, i.e.,
at every decision epoch the decision rule is same. So, for stationary strategy 
$f_t=f$ and $g_t=g$ for all $t$, i.e., $(f,f,f,\cdots)$ 
and $(g,g,g,\cdots)$ are the stationary strategies of player 1 
and player 2 respectively. We denote, with 
some abuse of notations, $f$ and 
$g$ as stationary strategies of player 1 and player 2 respectively. 
Let $F_S$ and $G_S$ denote the set of all stationary strategies of player 1 and player 2 respectively.
A stationary strategy $f\in F_S$ is identified with $f=\left((f(1))^T,(f(2))^T,\cdots,(f\left(|S|\right))^T\right)^T$, 
where $f(s)=\left(f(s,1),f(s,2)\cdots,f\left(s,|A^1(s)|\right)\right)^T$ for all $s\in S$; $|M|$ denotes the cardinality of set $M$.
Similarly, $g$ is identified with  $g=\big((g(1))^T,(g(2))^T,\cdots,$ $(g\left(|S|\right))^T\big)^T$, where 
$g(s)=\left(g(s,1),g(s,2)\cdots,g\left(s,|A^2(s)|\right)\right)^T$ for all $s\in S$. For all $s\in S$, $f(s,a^1)$ is then the probability of choosing action $a^1\in A^1(s)$ by player 1 and $g(s,a^2)$ is probability of choosing action $a^2\in A^2(s)$ by player 2 when state is $s$.

This leads to
the introduction of vector stochastic process $\{X_t,\mathbb{A}_t^1,\mathbb{A}_t^2\}_{t=0}^\infty$, where,
$X_t$ denotes the state of the game, $\mathbb{A}_t^1$, the action chosen by player 1 
and $\mathbb{A}_t^2$,
the action chosen by the player 2 at  time $t$.
An initial distribution $\gamma$ together with strategy pair $(f^h,g^h)\in F\times G$ defines a unique probability measure 
$\mathbb{P}_{f^h,g^h}^\gamma$ on an appropriate probability space with respect to which the laws of 
vector stochastic process 
$\{X_t,\mathbb{A}_t^1,\mathbb{A}_t^2\}_{t=0}^\infty$ of states and actions can be defined. The corresponding expectation operator on this probability space is denoted by $\mathbb{E}_{f^h,g^h}^\gamma$.                

\subsubsection*{The expected average cost}
These costs are average functionals of states and actions of the game and each player minimizes his cost functionals.
For given initial distribution $\gamma$ and strategy pair
 $(f^h,g^h)$ the expected average cost of player $i$, $i=1,2$, is defined as
\begin{equation}\label{SGCT_avg_cost}
 C_{ea}^i(\gamma,f^h,g^h) = \limsup _{T\rightarrow\infty}\frac{1}{T}\sum_{t=0}^{T-1}
\mathbb{E}_{f^h,g^h}^\gamma c^i(X_t,\mathbb{A}_t^1,\mathbb{A}_t^2) 
\end{equation}
where $ea$ stands for expected average.
\subsubsection*{The expected average constraints}
The expected average constraints of player 2 are defined by average functionals of
states and actions of the game which are bounded by given reals.
For given initial distribution $\gamma$ and strategy pair
 $(f^h,g^h)$ the expected average costs of player 2 are defined as
\begin{equation*}\label{SGCT_avg_cost2}
 D_{ea}^{2,l}(\gamma,f^h,g^h)= \limsup _{T\rightarrow\infty}\frac{1}{T}\sum_{t=0}^{T-1}\mathbb{E}_{f^h,g^h}^\gamma d^{2,l}(X_t,\mathbb{A}_t^1,\mathbb{A}_t^2),\;\; \forall\; l=1,2,\cdots,n_2.
\end{equation*}
 $D_{ea}^{2,l}(\cdot,\cdot)$ can capture the average consumption of resource 
$l$, $l=1,2,\cdots,n_2$, by player 2. The expected average constraints of player 2 are given by
\begin{equation}\label{SGCT_avg_const2}
 D_{ea}^{2,l}(\gamma,f^h,g^h)\leq \xi_l^2, \;\; \forall\; l=1,2,\cdots,n_2.
\end{equation}
A constraint in \eqref{SGCT_avg_const2} captures the fact that the average consumption of resource $l$ by player 2, when
player 1 uses strategy $f^h$ and player 2 uses strategy $g^h$ is not more than given constant $\xi_l^2$, $l=1,2,\cdots,n_2$.

\subsubsection*{The expected discounted cost} 
These costs are discounted functionals of states and actions of the game and each player minimizes his cost functionals.
For given initial distribution $\gamma$ and strategy pair
 $(f^h,g^h)$ the expected discounted cost of player $i$, $i=1,2$, is defined as 
\begin{equation}\label{SGCT_cost}
C_\beta^i(\gamma,f^h,g^h)= (1-\beta)\sum_{t=0}^\infty \beta^t\mathbb{E}^\gamma_{f^h,g^h}
c^i(X_t,\mathbb{A}_t^1,\mathbb{A}_t^2)
\end{equation}
where $\beta\in [0,1)$ is a fixed discount factor.
\subsubsection*{The expected discounted constraints}
The expected discounted constraints of player 2 are defined by discounted functionals of
states and actions of the game which are bounded by given reals.
For given initial distribution $\gamma$ and strategy pair
 $(f^h,g^h)$ the expected discounted costs of player 2 are defined as
\begin{equation*}\label{SGCT_cost2}
 D_\beta^{2,l}(\gamma,f^h,g^h)= (1-\beta)\sum_{t=0}^\infty \beta^t\mathbb{E}^\gamma_{f^h,g^h}d^{2,l}(X_t,\mathbb{A}_t^1,\mathbb{A}_t^2),
\;\; \forall\; l=1,2,\cdots,n_2.
\end{equation*}
$D_\beta^{2,l}(\cdot,\cdot)$ can capture the discounted cost for the consumption of resource $l$, 
 $l=1,2,\cdots,n_2$, by player 2. The expected discounted constraints of player 2 are given by
\begin{equation}\label{SGCT_disc_const2}
 D_\beta^{2,l}(\gamma,f^h,g^h)\leq \xi_l^2, \;\; \forall\; l=1,2,\cdots,n_2.
\end{equation}
A constraint in \eqref{SGCT_disc_const2} captures the fact that discounted cost for the consumption of resource $l$ by player 2, when
player 1 uses strategy $f^h$ and player 2 uses strategy $g^h$ is not more than given real $\xi_l^2$, $l=1,2,\cdots,n_2$.

\subsubsection*{Subscription type cost \cite{Altman}}
The subscription type costs of player 1 are defined as in \cite{Altman} 
\begin{equation*}
 D_{sub}^{1,k}(f)=\sum_{s\in S}\sum_{a^1\in A^1(s)}d_{sub}^{1,k}(s,a^1)f(s,a^1)
\end{equation*}
for all $k=1,2,\cdots, n_1$ and $f\in F_S$. These costs are called as subscription type because they are 
 based only on the fraction of time during which a given action is used at a given state and are not based 
on how frequently the state is visited and action is used. 
This situation can arise where for using some services there is 
subscription/registration fee for their planned use and that can be paid in advance. 
\subsubsection*{Subscription type constraints}
The subscription type constraints of player 1 are defined as 
\begin{equation}\label{SGCTsub}
D_{sub}^{1,k}(f)\leq \xi_k^1, \; \;\forall \; k=1,2,\cdots,n_1.
\end{equation}

We denote $C^i$, $i=1,2$ and  $D^{2,l}$, $l=1,2,\cdots, n_2$,  
as expected costs which can be either average or discounted that depends on the criterion being used. 
Under average cost criterion both players have expected average costs and under discounted cost criterion both players have expected discounted costs. Apart from this, player 1 has
subscription based costs which are constrained by some given reals. 
It is clear that player 1 has $n_1$ number of constraints which are defined by \eqref{SGCTsub} and player 2 has $n_2$ number of constraints which are defined as  
\begin{equation}\label{SGCTreal}
D^{2,l}(\gamma,f^h,g^h)\leq \xi_l^2, \; \;\forall \; l=1,2,\cdots,n_2.
\end{equation}
The constraints \eqref{SGCTsub} and \eqref{SGCTreal} are called subscription based and realization
based constraints respectively. Both the players choose their actions independently and want to minimize their 
expected cost subject to their constraints from \eqref{SGCTsub} and \eqref{SGCTreal}. We denote this constrained
stochastic game by $G^c$.
As Nash equilibrium exists in stationary strategies 
under assumptions (A1)-(A2) given below \cite{Shwartz}, from now onwards we restrict ourselves to the stationary strategies.

The strategy pair $(f,g)$ is called $1$-feasible if it satisfies \eqref{SGCTsub} and strategy pair $(f,g)$ 
is called $2$-feasible if it satisfies \eqref{SGCTreal}. 
As the player 1 constraints \eqref{SGCTsub} do not depend on the strategies of player 2, 
then strategy pair $(f,g)$ is  $1$-feasible for all $g\in G_S$ if $f$ satisfies \eqref{SGCTsub}.
A strategy pair $(f,g)$ is called feasible if it is both $1$-feasible and $2$-feasible. 
Let $F^\xi_S$ denote the set of all feasible stationary  strategy pairs for the constrained stochastic game $G^c$. We shall assume throughout that $F^\xi_S$ is non-empty. 
Now, we recall the definition of Nash equilibrium as given in \cite{Shwartz}. 
A strategy pair $(f^*,g^*)\in F^\xi_S$ is called 
the Nash equilibrium of constrained 
stochastic game $G^c$ if it satisfies the following conditions
\begin{gather}\label{SGCT_NE_def1}
 C^1(\gamma,f^*,g^*)\leq C^1(\gamma,f,g^*), \; \; \forall \; \mbox{1-feasible}\; (f,g^*)\\ \label{SGCT_NE_def2}
C^2(\gamma,f^*,g^*)\leq C^2(\gamma,f^*,g^h), \; \; \forall \; \mbox{2-feasible} \; (f^*,g^h).
\end{gather}
Thus, unilateral deviation of any player $i$, $i=1,2$, will either violate the constraints of $i$th player,
or if it does not, it will result in a cost $C^i$ for that player that is not lower than the
one achieved by feasible strategy pair $(f^*,g^*)$.
The strategy pair $(f^*,g^*)\in F_S^\xi$ satisfying \eqref{SGCT_NE_def1} and \eqref{SGCT_NE_def2} would still be Nash equilibrium of constrained stochastic game if we replace strategy $g^h$ by stationary strategy $g$ in \eqref{SGCT_NE_def2}. This can be seen by noticing that when strategy of player 1 is fixed as a stationary strategy $f^*$, then player 2 is faced with a constrained Markov decision process (CMDP) where optimal strategy always exists in the space of stationary strategies \cite{Altman2}.

\subsubsection*{Assumptions [Altman and Shwartz \cite{Shwartz}]}
\begin{enumerate}
\item[(A1)] Ergodicity: In case of average cost criterion the unichain ergodic structure holds, i.e., under
every stationary strategy $g$ the state process is an irreducible Markov chain with one ergodic class (and possibly some transient states).
\item[(A2)] Strong Slater condition:  
For player 2, there exists some $g'$ such that for any strategy $f$ of player 1, 
\begin{equation*}\label{SGCT_Slater}
 D^{2,l}(\gamma,f,g')<\xi_l^2, \; \; \forall \; l=1,2\cdots,n_2.
\end{equation*}
\end{enumerate}
As the constraints of player 1 are linear and does not depend on the strategies of player 2, the strong Slater condition 
is not needed for the constraints of player 1. 

We use the following notations throughout this section. 
For $i=1,2$, $s\in S$, $l=1,2,\cdots,n_2$,
\begin{enumerate}[$\bullet$]
\item $\boldsymbol{C}^i(s)=\left[c^i(s,a^1,a^2)\right]_{a^1=1,a^2=1}^{|A^1(s)|,|A^2(s)|}$.
\item $\boldsymbol{C}^i=\mbox{diag}\left(\boldsymbol{C}^i(1),\boldsymbol{C}^i(2),\cdots,\boldsymbol{C}^i(|S|)\right)$.
\item $\boldsymbol{D}^{2,l}(s)= \left[d^{2,l}(s,a^1,a^2)\right]_{a^1=1,a^2=1}^{|A^1(s)|,|A^2(s)|}$. 
\item $x=\left(x\left(1\right)^T,x\left(2\right)^T,\cdots,x\left(|S|\right)^T\right)^T$.
\item $x(s)=\left(x(s,1),x(s,2),\cdots,x\left(s,|A^2(s)|\right)\right)^T$.
\item $u=\left(u(1),u(2),\cdots,u(|S|)\right)^T$.
\item $v\in \mathbb{R}$.
\item $z=\left(z(1),z(2),\cdots,z(|S|)\right)^T$.
\item $\delta^1=(\delta_1^1,\delta_2^1,\cdots,\delta_{n_1}^1)^T$.
\item $\delta^2=(\delta_1^2,\delta_2^2,\cdots,\delta_{n_2}^2)^T$.
\item $\textbf{1}_n=(1,1,\cdots,1)^T\in \mathbb{R}^n$.
\end{enumerate} 

\subsection{Single controller constrained stochastic game with average cost criterion}
In this section we consider the game described in Section \ref{SGCT} with average cost criterion where both players choose their strategies independently and minimize their expected average costs as defined in \eqref{SGCT_avg_cost} subject to their constraints from \eqref{SGCTsub}, \eqref{SGCT_avg_const2}. The constraints of player 1 given in \eqref{SGCTsub} are subscription based.
The expected average constraints  \eqref{SGCT_avg_const2} of player 2 captures the fact that the average consumption of resource $l$, $l=1,2,\cdots,n_2$, by player 2 is not more than given $\xi_l^2$.
\subsubsection{Average occupation measure}
For an initial distribution $\gamma$ and a stationary strategy $g$ define the average occupation measure
\[
 \pi^2_{ea}(\gamma,g):= \left\{\pi_{ea}^2(\gamma,g;s,a^2): s\in S, a^2\in A^2(s)\right\}.
\]
For all $s\in S$, $a^2\in A^2(s)$, $\pi_{ea}^2(\gamma,g;s,a^2)$ is given by
\begin{equation}\label{SGCT_avg_occ_msure}
 \pi_{ea}^2(\gamma,g;s,a^2)=\pi^g(s)g(s,a^2)
\end{equation}
where $\pi^g=\left(\pi^g(1),\pi^g(2),\cdots,\pi^g(|S|)\right)$ is steady state distribution of Markov chain induced by stationary strategy $g$ which exists and is unique under (A1).
$\pi^2_{ea}(\gamma,g)$ can be considered as a probability measure over $\mathcal{K}^2$ that assigns probability $\pi_{ea}^2(\gamma,g;s,a^2)$ to the 
state-action pair $(s,a^2)$. 
The occupation measure defined as in \eqref{SGCT_avg_occ_msure} is independent from initial distribution $\gamma$, so, we drop $\gamma$ from the notation. 
For fixed strategy pair $(f,g)\in F_S\times G_S$ 
the expected average costs of both the players can be written in terms of occupation measure as
 \begin{equation*}\label{SGCT_totalavgcost}
C_{ea}^i(f,g)= \sum_{(s,a^2)\in \mathcal{K}^2} \pi_{ea}^2(g;s,a^2)\sum_{a^1\in A^1(s)}f(s,a^1)c^i(s,a^1,a^2),\;\;\forall\;i=1,2.
\end{equation*}

\begin{equation*}\label{SGCT_avg_constraints2}
D_{ea}^{2,l}(f,g)= \sum_{(s,a^2)\in \mathcal{K}^2} \pi_{ea}^2(g;s,a^2)\sum_{a^1\in A^1(s)}f(s,a^1)
d^{2,l}(s,a^1,a^2)
\end{equation*}
for all $l=1,2,\cdots,n_2$.

Let $Q_{ea}$ be the set of vectors $x\in \mathbb{R}^{|\mathcal{K}^2|}$ satisfying
\begin{equation*}\label{SGCT_avg_ach_occmsure}
\left\{
\begin{aligned}
&(i) ~ \sum_{(s,a^2)\in \mathcal{K}^2}\left(\delta(s,s')-p(s'|s,a^2)\right)x(s,a^2)= 0,\;\; \forall\; s'\in S \\
&(ii) ~ \sum_{(s,a^2)\in \mathcal{K}^2}x(s,a^2)=1\\
&(iii) ~ x(s,a^2)\geq 0, \;\; \forall\; s\in S,\; a^2\in A^2(s).\\
\end{aligned}
\right. 
\end{equation*}
$\delta(\cdot,\cdot)$ is a Kronecker delta, i.e.,
\begin{equation*}
\delta(s,s')= 
\begin{cases} 1 & \text{if $s=s'$,}
\\
0 &\text{if $s\neq s'$.}
\end{cases}
\end{equation*}
The stationary strategies are complete, i.e., set of occupation measures achieved by history dependent strategies equals to those achieved by stationary strategies and further equals to the set $Q_{ea}$ \cite{Altman2}. 
It is known that for each $(s,a^2)\in~\mathcal{K}^2$, 
$x(s,a^2)= \pi_{ea}^2(g;s,a^2)$ where 
\begin{equation}\label{SGCT_avg_2nd_plstrg}
 g(s,a^2)=\frac{x(s,a^2)}{\sum_{a^2\in A^2(s)}x(s,a^2)}
\end{equation}
whenever denominator is nonzero (when it is zero $g(s)$ is chosen arbitrarily from $\wp(A^2(s))$) \cite{Altman2}.

The cost of player 1 when he uses action $a^1$ at state $s$ and player 2 uses strategy $g$ is given by
\[
 c^1(s,a^1;g)=\sum_{(s,a^2)\in\mathcal{K}^2} c^1(s,a^1,a^2)\pi_{ea}^2(g;s,a^2).
\]
Similarly, the costs of player 2 when he uses action $a^2$ at state $s$ and player 1 uses strategy $f$ are given by
\begin{gather*}
 c^2(f;s,a^2)=\sum_{a^1\in A^1(s)} c^2(s,a^1,a^2)f(s,a^1).\\
d^{2,l}(f;s,a^2)=\sum_{a^1\in A^1(s)} d^{2,l}(s,a^1,a^2)f(s,a^1),\;\; \forall\; l=1,2,\cdots,n_2.
\end{gather*}

\subsubsection{Mathematical programming formulation}\label{SGCT_mathprgm}
We show the one to one correspondence between the stationary Nash equilibria of 
single controller constrained stochastic game $G^c$ with average cost criterion
and the global minima of a certain mathematical program.

\subsubsection*{Best response linear programs}
For a given stationary strategy of one player in a two player constrained stochastic game, the best response of the other player is given by solving a constrained Markov decision model, which, in turn, can be obtained by a linear program in finite state-action setting \cite{Altman2}. For fixed strategy $g$ of player 2, the best response of player 1 can be obtained from the following linear program:
\begin{equation}\label{SGCT_avg_brp1}
 \left.
\begin{aligned}
 & \min_f \sum_{(s,a^1)\in\mathcal{K}^1} c^1(s,a^1;g)f(s,a^1)\\
\text{s.t.}\\
& (i) ~ \sum_{(s,a^1)\in\mathcal{K}^1}d_{sub}^{1,k}(s,a^1)f(s,a^1)\leq \xi_k^1, \; \; \forall \; \; k=1,2,\cdots,n_1\\
&(ii) ~ \sum_{a^1\in A^1(s)} f(s,a^1)=1, \; \; \forall \; \; s\in S \\
& (iii) ~ f(s,a^1)\geq 0, \; \; \forall \; \; s\in S, \; a^1\in A^1(s).
\end{aligned}
\right\}
\end{equation}
The dual of \eqref{SGCT_avg_brp1} is 
\begin{equation}\label{SGCT_avg_dual1}
\left.
\begin{aligned}
&\max_{z, \; \delta^1}\left[\sum_{s\in S} z(s)-\sum_{k=1}^{n_1}\delta_k^1 \xi_k^1\right]\\
\text{s.t.}\\
& (i) ~ z(s)\leq c^1(s,a^1;g)+\sum_{k=1}^{n_1}\delta_k^1 d^{1,k}_{sub}(s,a^1), \; \; \forall \; \; s\in S, \; a^1\in A^1(s)\\
&(ii) ~ \delta_k^1\geq 0, \; \; \forall \; \; k=1,2,\cdots,n_1.
\end{aligned}
\right\}
\end{equation}
Similarly, for fixed strategy $f$ of player 1, the best response of player 2 can be obtained from the 
following linear program:
\begin{equation}\label{SGCT_avg_brp2}
 \left.
\begin{aligned}
&\min_x \sum_{(s,a^2)\in \mathcal{K}^2}c^2(f;s,a^2)x(s,a^2)\\
\text{s.t.}\\
&(i) ~ \sum_{(s,a^2)\in \mathcal{K}^2}\left(\delta(s,s')- p(s'|s,a^2)\right)x(s,a^2)= 0, \; \; \forall \; \; s'\in S\\
&(ii) ~ \sum_{(s,a^2)\in \mathcal{K}^2}x(s,a^2)=1\\
&(iii)~ \sum_{(s,a^2)\in \mathcal{K}^2}d^{2,l}(f;s,a^2)x(s,a^2)\leq \xi_l^2, \; \; \forall \; \; l=1,2,\cdots,n_2\\
& (iv) ~ x(s,a^2)\geq 0, \; \; \forall \; \; s\in S, \; a^2\in A^2(s). 
\end{aligned}
\right\}
\end{equation}
If $x^*$ is the optimal solution of the linear program \eqref{SGCT_avg_brp2}, then the best response 
strategy $g^*$ of player 2  can be obtained from \eqref{SGCT_avg_2nd_plstrg} \cite{Altman2}.  The dual of the 
linear program \eqref{SGCT_avg_brp2} is given by 
\begin{equation}\label{SGCT_avg_dual2}
\left.
\begin{aligned}
& \max_{v,u \; \delta^2}\left[v-\sum_{l=1}^{n_2} \delta_l^2\xi_l^2\right]\\
\text{s.t.}\\
&(i) ~ v+u(s)\leq c^2(f;s,a^2)+\sum_{l=1}^{n_2}\delta_l^2 d^{2,l}(f;s,a^2)\\
&\hspace{3cm}+\sum_{s'\in S} p(s'|s,a^2)u(s'), \; \; \forall \; \; s\in S,\; a^2\in A^2(s)\\
&(ii) ~ \delta_l^2\geq 0,   \; \; \forall \; \; l=1,2,\cdots,n_2.
\end{aligned}
\right \}
\end{equation}

We denote the decision variables and objective function of mathematical program [MP1] by $\eta=(v,u^T,z^T,f^T,x^T,(\delta^1)^T,(\delta^2)^T)^T$ and $\Phi(\eta)$  respectively. 

\begin{theorem}\label{SGCTMain_avg_thm}
\begin{enumerate}
\item [(a)] If $(f^*,g^*)$ is a Nash 
equilibrium of the constrained stochastic game $G^c$ with average cost criterion, then, there exists a vector 
$\eta^*=\left(v^*,u^{*T},z^{*T},f^{*T},x^{*T},(\delta^{1*})^T,(\delta^{2*})^T\right)^T$ such that it is 
a global minimum of mathematical program \textup{{[MP1]}} given below 

{\allowdisplaybreaks
\begin{align*}
&\textup{{[MP1]}} \quad   \min_{\eta}\left[\left(f^T\boldsymbol{C}^1x-\left(\textbf{1}_{|S|}^T z -(\delta^1)^T\xi^1\right)\right)
+\left(f^T\boldsymbol{C}^2x-\left(v-(\delta^2)^T\xi^2\right)\right)\right]  \\
&\text{s.t.} \\
& (i) ~ v+ u(s)\leq \left[\left(f(s)\right)^T\boldsymbol{C}^2(s)\right]_{a^2}
+\sum_{l=1}^{n_2}\delta^2_l \left[\left(f(s)\right)^T\boldsymbol{D}^{2,l}(s)\right]_{a^2}  \\
&\hspace{3cm}+\sum_{s'\in S} p(s'|s,a^2)u(s'), \; \; \forall \;\; s\in S,\; a^2\in A^2(s) \\
& (ii) ~ z(s)\leq \left[\boldsymbol{C}^1(s)x(s)\right]_{a^1}+\sum_{k=1}^{n_1}\delta_k^1 d^{1,k}_{sub}(s,a^1),\; \; \forall \;\; s\in S,\; a^1\in A^1(s)\\
& (iii)~\sum_{(s,a^2)\in \mathcal{K}^2}\left[\delta(s,s')- p(s'|s,a^2)\right]x(s,a^2)=0, \;\;\forall \;\; s'\in S \\
& (iv) ~ \sum_{(s,a^2)\in \mathcal{K}^2}x(s,a^2)=1 \\
& (v) ~ \sum_{(s,a^1)\in \mathcal{K}^1}d^{1,k}_{sub}(s,a^1)f(s,a^1)\leq \xi_k^1, \;\;\forall \;\; k=1,2,\cdots,n_1 \\
& (vi)  ~ \sum_{s\in S} (f(s))^T\boldsymbol{D}^{2,l}(s)x(s)\leq \xi_l^2, \;\;\forall \;\; l=1,2,\cdots,n_2\\
& (vii) ~ \sum_{a^1\in A^1(s)} f(s,a^1)=1,  \;\;\forall \;\; s\in S \\
& (viii) ~ f(s,a^1)\geq 0, \;\;\forall \;\; s\in S,\;a^1\in A^1(s)  \\
& (ix) ~   x(s,a^2)\geq 0, \;\; \forall \;\; s\in S,\;a^2\in A^2(s) \\
& (x)  ~ \delta_k^1\geq 0, \;\; \forall \;\; k=1,2,\cdots,n_1 \\
& (xi) ~ \delta^2_l\geq 0,  \;\; \forall \;\; l=1,2,\cdots,n_2.
\end{align*}}
with $\Phi(\eta^*)=0$. 
\item [(b)] If $\eta^*=\left(v^*,u^{*T},z^{*T},f^{*T},x^{*T},(\delta^{1*})^T,(\delta^{2*})^T\right)^T$ 
is a global minimum of \textup{{[MP1]}} with $\Phi(\eta^*)=0$, then, $(f^*,g^{*})$ is a Nash equilibrium 
of the constrained stochastic game $G^c$ with average cost criterion, where
\[
 g^*(s,a^2)=\frac{x^*(s,a^2)}{\sum_{a^2\in A^2(s)}x^*(s,a^2)}
\]
for all $s\in S$, $a^2\in A^2(s)$ whenever the denominator is non-zero (when it is zero $g^*(s)$ is chosen arbitrarily from  $\wp(A^2(s))$).
\end{enumerate}
\end{theorem}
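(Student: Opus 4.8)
The plan is to recognize the objective $\Phi(\eta)$ as the sum of the duality gaps of the two primal--dual pairs of best response linear programs --- \eqref{SGCT_avg_brp1}--\eqref{SGCT_avg_dual1} for player~1 and \eqref{SGCT_avg_brp2}--\eqref{SGCT_avg_dual2} for player~2 --- and to use that, by weak linear programming duality, each gap is nonnegative on the feasible set of [MP1] and vanishes precisely when the corresponding player plays a constrained best response. Thus $\Phi\ge 0$ on the feasible set and $\Phi(\eta)=0$ characterizes mutual constrained best responses, i.e.\ Nash equilibria.

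First I would fix the dictionary between the variables of [MP1] and the best response programs. For a feasible $\eta=(v,u^T,z^T,f^T,x^T,(\delta^1)^T,(\delta^2)^T)^T$, constraints $(iii)$, $(iv)$, $(ix)$ say $x\in Q_{ea}$, so by \cite{Altman2} the strategy $g$ recovered from $x$ via \eqref{SGCT_avg_2nd_plstrg} satisfies $x(s,a^2)=\pi_{ea}^2(g;s,a^2)$ for all $(s,a^2)\in\mathcal{K}^2$; consequently $\left[\boldsymbol{C}^1(s)x(s)\right]_{a^1}=c^1(s,a^1;g)$, $f^T\boldsymbol{C}^i x=C_{ea}^i(f,g)$, and $\sum_{s\in S}(f(s))^T\boldsymbol{D}^{2,l}(s)x(s)=D_{ea}^{2,l}(f,g)$. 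Under this identification, $(v)$, $(vii)$, $(viii)$ are primal feasibility of \eqref{SGCT_avg_brp1} and $(ii)$, $(x)$ feasibility of its dual \eqref{SGCT_avg_dual1}, all with the data $c^1(\cdot;g)$ determined by $g$; likewise $(iii)$, $(iv)$, $(vi)$, $(ix)$ are primal feasibility of \eqref{SGCT_avg_brp2} and $(i)$, $(xi)$ feasibility of its dual \eqref{SGCT_avg_dual2}, with the data $c^2(f;\cdot)$, $d^{2,l}(f;\cdot)$ determined by $f$. Weak duality for the two pairs then gives $f^T\boldsymbol{C}^1 x\ge \textbf{1}_{|S|}^T z-(\delta^1)^T\xi^1$ and $f^T\boldsymbol{C}^2 x\ge v-(\delta^2)^T\xi^2$, hence $\Phi(\eta)\ge 0$, with $\Phi(\eta)=0$ if and only if both of these hold with equality.

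For part~(a), starting from a Nash equilibrium $(f^*,g^*)$ I would put $x^*(s,a^2)=\pi_{ea}^2(g^*;s,a^2)$. Since $C_{ea}^1(f,g^*)$ is linear in $f$ and the $1$-feasibility conditions are exactly \eqref{SGCTsub}, the Nash inequality \eqref{SGCT_NE_def1} says precisely that $f^*$ is optimal for \eqref{SGCT_avg_brp1} with $g=g^*$; similarly, using that with $f^*$ fixed player~2 faces a CMDP and may be restricted to stationary strategies, \eqref{SGCT_NE_def2} says $x^*$ is optimal for \eqref{SGCT_avg_brp2} with $f=f^*$. Both programs are feasible (by $(f^*,g^*)\in F_S^\xi$) and have finite optimal value (their feasible sets lie in a product of simplices and in the polytope $Q_{ea}$, respectively), so linear programming strong duality furnishes dual optimal solutions $(z^*,\delta^{1*})$ and $(v^*,u^*,\delta^{2*})$ whose objective values equal the respective primal optima. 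By the dictionary the resulting $\eta^*$ is feasible for [MP1], and both duality gaps vanish, so $\Phi(\eta^*)=0$; with $\Phi\ge 0$ this makes $\eta^*$ a global minimum. For part~(b), given a global minimum $\eta^*$ with $\Phi(\eta^*)=0$, the decomposition forces each gap to be zero. Then $f^*$ is primal feasible and $(z^*,\delta^{1*})$ dual feasible for the player~1 pair with equal objective values, so $f^*$ is optimal for \eqref{SGCT_avg_brp1} against $g^*$; reading this back, $(f^*,g^*)$ is $1$-feasible and \eqref{SGCT_NE_def1} holds. Symmetrically $x^*\in Q_{ea}$ equals $\pi_{ea}^2(g^*;\cdot)$, it is primal feasible and $(v^*,u^*,\delta^{2*})$ dual feasible for the player~2 pair with equal objectives, so $x^*$ is optimal for \eqref{SGCT_avg_brp2} against $f^*$; hence $(f^*,g^*)$ is $2$-feasible, and since every $2$-feasible stationary $g$ gives a feasible point of \eqref{SGCT_avg_brp2}, \eqref{SGCT_NE_def2} holds for stationary deviations, and then for history-dependent $g^h$ by the CMDP remark. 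Thus $(f^*,g^*)\in F_S^\xi$ is a Nash equilibrium.

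I expect the delicate points to be, first, the faithful translation between stationary strategies and the occupation-measure variable $x$ --- in particular that the bilinear forms $f^T\boldsymbol{C}^i x$ and $\sum_{s\in S}(f(s))^T\boldsymbol{D}^{2,l}(s)x(s)$ reproduce $C_{ea}^i(f,g)$ and $D_{ea}^{2,l}(f,g)$ when $x=\pi_{ea}^2(g;\cdot)$, including the degenerate states with $\sum_{a^2\in A^2(s)}x(s,a^2)=0$ --- and, second, the observation that although [MP1] is a single coupled nonconvex program (each player's cost vector depends on the other player's variables), weak duality still applies to each primal--dual pair separately at any fixed feasible $\eta$, which is exactly what delivers $\Phi\ge 0$ and identifies $\{\Phi=0\}$ with the set of mutual constrained best responses. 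The assumptions (A1)--(A2) enter only through the existence of a stationary Nash equilibrium and through the occupation-measure/CMDP machinery of \cite{Altman2}; no further constraint qualification is needed, since strong duality for the finite linear programs follows from feasibility and finiteness of their values.
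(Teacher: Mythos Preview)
Your proposal is correct and follows essentially the same approach as the paper's proof. The paper also builds $\eta^*$ from a Nash equilibrium via the occupation measure of $g^*$ together with strong duality for the two best-response LP pairs, and establishes $\Phi\ge 0$ by the very computation you describe (multiplying constraint $(i)$ by $x(s,a^2)$ and constraint $(ii)$ by $f(s,a^1)$ and summing, i.e.\ weak duality); your explicit framing of $\Phi$ as a sum of duality gaps is a clean way to package exactly this argument.
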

\begin{proof}
$(a)$~ Let $(f^*,g^*)$ be Nash equilibrium of the constrained stochastic game $G^c$ with average cost criterion. We  
construct occupation measure
$x^*$ corresponding to $g^*$ as given in \eqref{SGCT_avg_occ_msure} then $x^*$
satisfies $(iii)$, $(iv)$ and $(ix)$ of [MP1]. The strategy pair $(f^*,g^*)$
is feasible because it is a Nash equilibrium, so, $(f^*,x^*)$ satisfy $(v)$-$(viii)$ of [MP1].
As  $f^*$ and $g^*$ are best responses of each other, $x^*$ as constructed above will be optimal solution
of linear program \eqref{SGCT_avg_brp2} for fixed $f^*$ from Proposition $3.1(ii)$ of \cite{Shwartz}. 
By strong duality theorem \cite{Bertsimas}, \cite{Bazaraa}
there exists optimal solution ($v^*$, $u^*$, $\delta^{2*}$) of \eqref{SGCT_avg_dual2} such that $(v^*,u^*,f^*,\delta^{2*})$ satisfy $(i)$ and $(xi)$ of [MP1] and objective function value of \eqref{SGCT_avg_brp2} and \eqref{SGCT_avg_dual2} are equal. 
Similarly, $f^*$ is an optimal solution of linear program
\eqref{SGCT_avg_brp1} for fixed $g^*$ and hence 
there exists optimal solution ($z^*$, $\delta^{1*}$) of \eqref{SGCT_avg_dual1} such that $(z^*,x^*,\delta^{1*})$ satisfy $(ii)$ and $(x)$ of 
[MP1] and objective function value of \eqref{SGCT_avg_brp1} and \eqref{SGCT_avg_dual1}
are equal. In other words we have a point $\eta^{*}=\left(v^*,u^{*T},z^{*T},f^{*T},x^{*T},(\delta^{1*})^T,(\delta^{2*})^T\right)^T$ such that $(i), (ii), (x) \; \mbox{and} \;  (xi)$ are satisfied and 
\[
 f^{*T} \boldsymbol{C}^1 x^*=\textbf{1}_{|S|}^T z^*-(\delta^{1*})^T\xi^1,
\]
\[
 f^{*T} \boldsymbol{C}^2 x^*= v^* -(\delta^{2*})^T\xi^2.
\]
Thus, $\eta^*$ is a feasible point of the mathematical program [MP1] and from the construction of 
the objective function, $\Phi(\eta^*)=0$.

 Let $\eta$ be any feasible point of [MP1]. Multiply each constraint in $(ii)$ 
of [MP1] corresponding to pair $(s,a^1)$ by $f(s,a^1)$ and then add
over all $(s,a^1)\in~\mathcal{K}^1$ and by using the constraints $(v)$, $(vii)$, $(viii)$ and $(x)$ we have
\begin{equation}\label{SGCT_fes1}
 f^T\boldsymbol{C}^1 x\geq \textbf{1}_{|S|}^T z-(\delta^1)^{T}\xi^1.
\end{equation} 
By using the similar arguments as above, i.e., multiply each constraint in $(i)$ of [MP1] corresponding 
to pair $(s,a^2)$ by $x(s,a^2)$ and add over all $(s,a^2)\in~\mathcal{K}^2$
and by using the constraints $(iii)$, $(iv)$, $(vi)$, 
$(ix)$ and $(xi)$, we have 
\begin{equation} \label{SGCT_fes2}
 f^T\boldsymbol{C}^2 x\geq v-(\delta^2)^T\xi^2.
\end{equation} 
We have from \eqref{SGCT_fes1} and \eqref{SGCT_fes2},  $\Phi(\eta)\geq 0$ for all feasible points $\eta$ of [MP1]. 
Thus $\eta^*$ is a global minimum of the [MP1].

$(b)$ ~ Let $\eta^*$ be a global minimum of [MP1] such that $\Phi(\eta^*)=0$.
As $\eta^*$ is a feasible point of [MP1] then \eqref{SGCT_fes1} and \eqref{SGCT_fes2} will also 
hold for $\eta^*$, i.e.,   
\begin{gather*}
 f^{*T}\boldsymbol{C}^1 x^*\geq \textbf{1}_{|S|}^T z^*-(\delta^{1*})^{T}\xi^1\\
 f^{*T}\boldsymbol{C}^2 x^*\geq v^*-(\delta^{2*})^{T}\xi^2.
\end{gather*}
From above, both the terms of objective function are non-negative at $\eta^*$ but the objective function value is zero at $\eta^*$ which means both the terms are individually zero, i.e., 
 
\begin{equation}\label{SGCTe5}
\left.
\begin{aligned}
& f^{*T}\boldsymbol{C}^1 x^*= \textbf{1}_{|S|}^T z^*-(\delta^{1*})^{T}\xi^1\\
& f^{*T}\boldsymbol{C}^2 x^*= v^*-(\delta^{2*})^{T}\xi^2.
\end{aligned}
\right\}
\end{equation}
Fix $\eta^*$, and from the same argument used as in \eqref{SGCT_fes1}
and by using the constraints $(v)$, $(vii)$, $(viii)$, $(x)$ and \eqref{SGCTe5} 
we have the following inequality
\[
 f^{*T} \boldsymbol{C}^1 x^*\leq f^T \boldsymbol{C}^1 x^*, \; \; \forall \; \mbox{$1$-feasible} \; \; (f,x^*),
\]
Similarly we have 
\[
f^{*T} \boldsymbol{C}^2 x^*\leq f^{*T} \boldsymbol{C}^2 x, \; \; \forall \; \mbox{2-feasible} \; (f^*,x) 
\]
In other words we can say that    
\begin{gather*}
 C_{ea}^1(f^*,g^*)\leq C_{ea}^1(f,g^*), \; \; \; \forall \; \mbox{1-feasible} (f,g^*)\\
C_{ea}^2(f^*,g^*)\leq C_{ea}^2(f^*,g), \; \; \; \forall \; \mbox{2-feasible} (f^*,g),
\end{gather*}
where 
\[
 g^*(s,a^2)=\frac{x^*(s,a^2)}{\sum_{a^2\in A^2(s)}x^*(s,a^2)}
\]
for all $s\in S$, $a^2\in A^2(s)$ whenever the denominator is non-zero (when it is zero $g^*(s)$ is chosen arbitrarily from $\wp(A^2(s))$).
This implies that $(f^*,g^*)$ is a Nash equilibrium of the
constrained stochastic game $G^c$ with average cost criterion.
\end{proof}
\begin{remark}
Because the diagonal elements of the objective function's Hessian matrix are zero, it will 
have some positive as well as some negative eigenvalues. So, the objective function of [MP1] is 
a non-convex function. As there are some non-convex constraints, the feasible 
region is also not a convex set. So, [MP1] is a non-convex constrained optimization problem. 
\end{remark}

\subsubsection{Special cases}
We consider two special cases. First, we consider nonzero sum game as defined in Section \ref{SGCT} with average cost criterion where the constraints of player 2 do not depend on the strategies of  player 1. Next, we briefly consider the zero sum game as considered in \cite{Hordijk2}.
\subsubsection*{ (i) Quadratic program in the case of decoupled constraints}
We consider the situation where the constraints of player 2 do not depend on the strategies of the player 1.
This is possible when the immediate costs of player 2 which correspond to the constraints of player 2 do not depend 
on the actions of player 1, i.e., 
\begin{equation}\label{SGCT_LPeq} 
 d^{2,l}(s,a^1,a^2)=d^{2,l}(s,a^2), \; \forall \; s\in S, a^1\in A^1(s), a^2\in A^2(s)\; \mbox{and} \; \forall \; l=1,2,\cdots,n_2.
\end{equation} 
Under this condition [MP1] reduces to the quadratic program [QP1] given below:
{\allowdisplaybreaks
\begin{align*}
&\textup{{[QP1]}} \quad  \min_{\eta}\left[\left(f^T\boldsymbol{C}^1x-\left(\textbf{1}_{|S|}^T z -(\delta^1)^T\xi^1\right)\right)
+\left(f^T\boldsymbol{C}^2x-\left(v-(\delta^2)^T\xi^2\right)\right)\right] \\
&\text{s.t.}\\
& (i) ~ v+ u(s)\leq \left[(f(s))^T\boldsymbol{C}^2(s)\right]_{a^2}
+\sum_{l=1}^{n_2}\delta^2_l d^{2,l}(s,a^2)\\
&\hspace{3cm}+\sum_{s'\in S} p(s'|s,a^2)u(s'), \; \; \forall \;\; s\in S,\; a^2\in A^2(s)\\
& (ii) ~ z(s)\leq \left[\boldsymbol{C}^1(s)x(s)\right]_{a^1}+\sum_{k=1}^{n_1}\delta_k^1 d^{1,k}_{sub}(s,a^1),\; \; \forall \;\; s\in S,\; a^1\in A^1(s)\\
& (iii)~\sum_{(s,a^2)\in \mathcal{K}^2}\left[\delta(s,s')- p(s'|s,a^2)\right]x(s,a^2)=0, \;\;\forall \;\; s'\in S\\
& (iv) ~ \sum_{(s,a^2)\in \mathcal{K}^2}x(s,a^2)=1\\
& (v) ~ \sum_{(s,a^1)\in \mathcal{K}^1}d^{1,k}_{sub}(s,a^1)f(s,a^1)\leq \xi_k^1, \;\;\forall \;\; k=1,2,\cdots,n_1\\
& (vi)  ~ \sum_{(s,a^2)\in \mathcal{K}^2}d^{2,l}(s,a^2)x(s,a^2)\leq \xi_l^2, \;\;\forall \;\; l=1,2,\cdots,n_2\\
& (vii) ~ \sum_{a^1\in A^1(s)} f(s,a^1)=1,  \;\;\forall \;\; s\in S\\
& (viii) ~ f(s,a^1)\geq 0, \;\;\forall \;\; s\in S,\;a^1\in A^1(s) \\ 
& (ix) ~   x(s,a^2)\geq 0, \;\; \forall \;\; s\in S,\;a^2\in A^2(s)\\
& (x)  ~ \delta_k^1\geq 0, \;\; \forall \;\; k=1,2,\cdots,n_1\\
& (xi) ~ \delta^2_l\geq 0,  \;\; \forall \;\; l=1,2,\cdots,n_2.
\end{align*}
}
\subsubsection*{(ii) Zero sum single controller constrained stochastic games}  
The zero sum single controller constrained stochastic game with average cost criterion is considered in \cite{Hordijk2}.
We assume that player 1 minimizes the expected average cost of the game and player 2 has opposite objective,
 i.e., he  maximizes the expected average cost of the game.
In \cite{Hordijk2}, the player who controls the transition probabilities has realization based constraints and other player has no constraints and these games can be solved by a linear program. By substituting  $\boldsymbol{C}^1(s) =-\boldsymbol{C}^2(s)=\boldsymbol{C}(s)$ for all $s\in S$ and without the subscription type constraints, the quadratic  program [QP1] can be reduced into primal-dual pair of linear programs which are same as given in  \cite{Hordijk2}.

\subsection{Single controller constrained stochastic game with discounted cost criterion}
In this section we consider the game described in Section \ref{SGCT} with discounted cost criterion 
where both players choose their strategies independently and minimize their expected discounted costs as defined in \eqref{SGCT_cost}
subject to their constraints from \eqref{SGCTsub}, \eqref{SGCT_disc_const2}. The constraints of player 1 given in \eqref{SGCTsub} are subscription based. The expected discounted constraints \eqref{SGCT_disc_const2} of player 2 captures the fact that discounted cost for the consumption of resource $l$, $l=1,2,\cdots,n_2$, by player 2 is not more than given $\xi_l^2$.
Similar to the average cost criterion we give one mathematical program which characterizes stationary Nash equilibria of these games.
\subsubsection{Discounted occupation measure}
For an initial distribution $\gamma$ and a stationary strategy $g$ define the discounted occupation measure
\[
 \pi_\beta^2(\gamma,g):= \left\{\pi_\beta^2(\gamma,g;s,a^2): s\in S, a^2\in A^2(s)\right\}.
\]
For all $s\in S$, $a^2\in A^2(s)$, $\pi_\beta^2(\gamma,g;s,a^2)$ is  given by
\begin{equation}\label{SGCT_disc_occ_measure}
\pi_\beta^2(\gamma,g;s,a^2)= (1-\beta)\left(\sum_{t=0}^\infty \beta^t\sum_{s'\in S} \gamma(s')
 \left([P(g)]^t\right)_{s's}\right)g(s,a^2), 
\end{equation}
here $[P(g)]^0$ is the identity matrix. 
$\pi_\beta^2(\gamma,g)$ can be considered as a probability measure over $\mathcal{K}^2$ that assigns probability 
$\pi_\beta^2(\gamma,g;s,a^2)$ to the state-action pair $(s,a^2)$.
For fixed strategy pair $(f,g)\in F_S\times G_S$ 
the expected discounted costs of both players can be written in terms of occupation measure as
 \begin{equation*}\label{SGCT_total_disc_cost}
C_\beta^i(\gamma,f,g)= \sum_{(s,a^2)\in \mathcal{K}^2} \pi_\beta^2(\gamma,g;s,a^2)\sum_{a^1\in A^1(s)}f(s,a^1)c^i(s,a^1,a^2), \;\;\forall\; i=1,2.
\end{equation*}

\begin{equation*}\label{SGCT_disc_constraints2}
D_\beta^{2,l}(\gamma,f,g)= \sum_{(s,a^2)\in \mathcal{K}^2} \pi_\beta^2(\gamma,g;s,a^2)\sum_{a^1\in A^1(s)}f(s,a^1)
d^{2,l}(s,a^1,a^2)
\end{equation*}
for all $l=1,2,\cdots,n_2$.

Let $Q^\beta(\gamma)$ be the set of vectors $x\in \mathbb{R}^{|\mathcal{K}^2|}$ satisfying
\begin{equation*}\label{SGCT_disc_ach_occmsure}
\left\{
\begin{aligned}
&(i) ~ \sum_{(s,a^2)\in \mathcal{K}^2}\left(\delta(s,s')-\beta p(s'|s,a^2)\right)x(s,a^2)= (1-\beta)\gamma(s'),\;\; \forall\; s'\in S \\
&(ii) ~ x(s,a^2)\geq 0, \;\; \forall\; s\in S,\; a^2\in A^2(s).\\
\end{aligned}
\right. 
\end{equation*}
By summing the first constraint over $s'$ we note that $\sum_{(s,a^2)\in \mathcal{K}^2}x(s,a^2)=1$, so the $x$ satisfying the above constraints are probability measures.
The stationary strategies are complete, i.e., set of occupation measures achieved by history dependent strategies equals to those achieved by stationary strategies and further equals to the set $Q^\beta(\gamma)$ \cite{Altman2}. 
It is known that for each $(s,a^2)\in \mathcal{K}^2$, 
$x(s,a^2)= \pi_\beta^2(\gamma,g;s,a^2)$ where 
\begin{equation}\label{SGCT_disc_2nd_plstrg}
 g(s,a^2)=\frac{x(s,a^2)}{\sum_{a^2\in A^2(s)}x(s,a^2)}
\end{equation}
whenever denominator is nonzero (when it is zero $g(s)$ is chosen arbitrarily from $\wp(A^2(s))$) \cite{Altman2}.

The cost of player 1 when he uses action $a^1$ at state $s$ and player 2 uses strategy $g$ is given by
\[
 c^1(s,a^1;g)=\sum_{(s,a^2)\in\mathcal{K}^2} c^1(s,a^1,a^2)\pi_\beta^2(\gamma,g;s,a^2).
\]
Similarly, the costs of player 2 when he uses action $a^2$ at state $s$ and player 1 uses strategy $f$ are given by
\begin{gather*}
 c^2(f;s,a^2)=\sum_{a^1\in A^1(s)} c^2(s,a^1,a^2)f(s,a^1).\\
d^{2,l}(f;s,a^2)=\sum_{a^1\in A^1(s)} d^{2,l}(s,a^1,a^2)f(s,a^1),\;\; \forall\; l=1,2,\cdots,n_2.
\end{gather*}

\subsubsection{Mathematical programming formulation}
Similar to average cost criterion we show the one to one correspondence between the stationary Nash equilibria of this class of game and the global minima of a certain mathematical program.
\subsubsection*{Best response linear programs}
For fixed strategy $g$ of player 2, the best response of player 1 can be obtained from the following linear program:
\begin{equation}\label{SGCT_disc_brp1}
 \left.
\begin{aligned}
 & \min_f \sum_{(s,a^1)\in\mathcal{K}^1} c^1(s,a^1;g)f(s,a^1)\\
\text{s.t.}\\
& (i) ~ \sum_{(s,a^1)\in\mathcal{K}^1}d_{sub}^{1,k}(s,a^1)f(s,a^1)\leq \xi_k^1, \; \; \forall \; \; k=1,2,\cdots,n_1\\
&(ii) ~ \sum_{a^1\in A^1(s)} f(s,a^1)=1, \; \; \forall \; \; s\in S \\
& (iii) ~ f(s,a^1)\geq 0, \; \; \forall \; \; s\in S, \; a^1\in A^1(s).
\end{aligned}
\right\}
\end{equation}
The dual of \eqref{SGCT_disc_brp1} is 
\begin{equation}\label{SGCT_disc_dual1}
\left.
\begin{aligned}
&\max_{z, \; \delta^1}\left[\sum_{s\in S} z(s)-\sum_{k=1}^{n_1}\delta_k^1 \xi_k^1\right]\\
\text{s.t.}\\
& (i) ~ z(s)\leq c^1(s,a^1;g)+\sum_{k=1}^{n_1}\delta_k^1 d^{1,k}_{sub}(s,a^1), \; \; \forall \; \; s\in S, \; a^1\in A^1(s)\\
&(ii) ~ \delta_k^1\geq 0, \; \; \forall \; \; k=1,2,\cdots,n_1.
\end{aligned}
\right\}
\end{equation}
Similarly for fixed strategy $f$ of player 1, the best response of player 2 can be obtained from the 
following linear program:
\begin{equation}\label{SGCT_disc_brp2}
 \left.
\begin{aligned}
&\min_x \sum_{(s,a^2)\in \mathcal{K}^2}c^2(f;s,a^2)x(s,a^2)\\
\text{s.t.}\\
&(i) ~ \sum_{(s,a^2)\in \mathcal{K}^2}\left(\delta(s,s')-\beta p(s'|s,a^2)\right)x(s,a^2)= (1-\beta)\gamma(s'), \; \; \forall \; \; s'\in S\\
&(ii)~ \sum_{(s,a^2)\in \mathcal{K}^2}d^{2,l}(f;s,a^2)x(s,a^2)\leq \xi_l^2, \; \; \forall \; \; l=1,2,\cdots,n_2\\
& (iii) ~ x(s,a^2)\geq 0, \; \; \forall \; \; s\in S, \; a^2\in A^2(s). 
\end{aligned}
\right\}
\end{equation}
If $x^*$ is the optimal solution of the linear program \eqref{SGCT_disc_brp2} then the best response 
strategy $g^*$ of player 2  can be obtained from \eqref{SGCT_disc_2nd_plstrg} \cite{Altman2}. The dual of the 
linear program \eqref{SGCT_disc_brp2} is given by 
\begin{equation}\label{SGCT_disc_dual2}
\left.
\begin{aligned}
& \max_{u \; \delta^2}\left[\sum_{s\in S}(1-\beta)\gamma(s)u(s)
-\sum_{l=1}^{n_2} \delta_l^2\xi_l^2\right]\\
\text{s.t.}\\
&(i) ~ u(s)\leq c^2(f;s,a^2)+\sum_{l=1}^{n_2}\delta_l^2 d^{2,l}(f;s,a^2)\\
&\hspace{3cm}+\beta\sum_{s'\in S} p(s'|s,a^2)u(s'), \; \; \forall \; \; s\in S,\; a^2\in A^2(s)\\
&(ii) ~ \delta_l^2\geq 0,   \; \; \forall \; \; l=1,2,\cdots,n_2.
\end{aligned}
\right \}
\end{equation}
By using the best response linear programs \eqref{SGCT_disc_brp1}, \eqref{SGCT_disc_dual1}, \eqref{SGCT_disc_brp2},\eqref{SGCT_disc_dual2} we have similar results as in the case of average cost criterion. 
\begin{theorem}\label{SGCT_disc_Main_thm}
\begin{enumerate}
\item [(a)] If $(f^*,g^*)$ is a Nash 
equilibrium of the constrained stochastic game $G^c$ with discounted cost criterion, then, there exists a vector 
$\eta^*=\left(u^{*T},z^{*T},f^{*T},x^{*T},(\delta^{1*})^T,(\delta^{2*})^T\right)^T$ such that it is 
a global minimum of mathematical program \textup{{[MP2]}} given below  
{\allowdisplaybreaks
\begin{align*}
&\textup{{[MP2]}} \quad \min_{\eta}\Big[\left(f^T\boldsymbol{C}^1x-\left(\textbf{1}_{|S|}^T z -(\delta^1)^T\xi^1\right)\right)
 \\ & \hspace{3cm}+\left(f^T\boldsymbol{C}^2x-\left((1-\beta)\gamma^T u-(\delta^2)^T\xi^2\right)\right)\Big] \\
&\text{s.t.}\\
& (i) ~  u(s)\leq \left[(f(s))^T\boldsymbol{C}^2(s)\right]_{a^2}
+\sum_{l=1}^{n_2}\delta^2_l \left[(f(s))^T\boldsymbol{D}^{2,l}(s)\right]_{a^2}\\
&\hspace{3cm}+\beta\sum_{s'\in S} p(s'|s,a^2)u(s'), \; \; \forall \;\; s\in S,\; a^2\in A^2(s)\\
& (ii) ~ z(s)\leq \left[\boldsymbol{C}^1(s)x(s)\right]_{a^1}+\sum_{k=1}^{n_1}\delta_k^1 d^{1,k}_{sub}(s,a^1),\; \; \forall \;\; s\in S,\; a^1\in A^1(s)\\
& (iii)~\sum_{(s,a^2)\in \mathcal{K}^2}\left[\delta(s,s')-\beta p(s'|s,a^2)\right]x(s,a^2)=(1-\beta)\gamma(s'), \;\;\forall \;\; s'\in S\\
& (iv) ~ \sum_{(s,a^1)\in \mathcal{K}^1}d^{1,k}_{sub}(s,a^1)f(s,a^1)\leq \xi_k^1, \;\;\forall \;\; k=1,2,\cdots,n_1\\
& (v)  ~ \sum_{s\in S} (f(s))^T\boldsymbol{D}^{2,l}(s)x(s)\leq \xi_l^2, \;\;\forall \;\; l=1,2,\cdots,n_2\\
& (vi) ~ \sum_{a^1\in A^1(s)} f(s,a^1)=1,  \;\;\forall \;\; s\in S\\
& (vii) ~ f(s,a^1)\geq 0, \;\;\forall \;\; s\in S,\;a^1\in A^1(s) \\ 
& (viii) ~   x(s,a^2)\geq 0, \;\; \forall \;\; s\in S,\;a^2\in A^2(s)\\
& (ix)  ~ \delta_k^1\geq 0, \;\; \forall \;\; k=1,2,\cdots,n_1\\
& (x) ~ \delta^2_l\geq 0,  \;\; \forall \;\; l=1,2,\cdots,n_2.
\end{align*}
}
with $\Phi(\eta^*)=0$.

\item [(b)] If $\eta^*=\left(u^{*T},z^{*T},f^{*T},x^{*T},(\delta^{1*})^T,(\delta^{2*})^T\right)^T$ 
is a global minimum of \textup{{[MP2]}} with $\Phi(\eta^*)=0$, then, $(f^*,g^{*})$ is a Nash equilibrium 
of the constrained stochastic game $G^c$ with discounted cost criterion, where
\[
 g^*(s,a^2)=\frac{x^*(s,a^2)}{\sum_{a^2\in A^2(s)}x^*(s,a^2)}
\]
for all $s\in S$, $a^2\in A^2(s)$ whenever the denominator is non-zero (when it is zero $g^*(s)$ is chosen arbitrarily from $\wp(A^2(s))$).
\end{enumerate}
\end{theorem}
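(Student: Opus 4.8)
The plan is to follow the same route as in the proof of Theorem~\ref{SGCTMain_avg_thm}, with the average best-response linear programs replaced by their discounted counterparts \eqref{SGCT_disc_brp1}--\eqref{SGCT_disc_dual2}. The key structural observation is that [MP2] is precisely these four linear programs glued together: constraint $(i)$ is the dual-feasibility condition of \eqref{SGCT_disc_dual2} written with $c^2(f;s,a^2)=[(f(s))^T\boldsymbol{C}^2(s)]_{a^2}$ and $d^{2,l}(f;s,a^2)=[(f(s))^T\boldsymbol{D}^{2,l}(s)]_{a^2}$; constraint $(ii)$ is the dual-feasibility condition of \eqref{SGCT_disc_dual1} with $c^1(s,a^1;g)=[\boldsymbol{C}^1(s)x(s)]_{a^1}$, an identity valid whenever $x$ is the discounted occupation measure of $g$; constraints $(iii)$ and $(viii)$ are primal feasibility of \eqref{SGCT_disc_brp2}; and $(iv)$--$(vii)$, $(ix)$--$(x)$ collect the remaining primal-feasibility and sign constraints of the four programs. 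The objective $\Phi$ is the sum of the two primal-minus-dual duality gaps, so weak duality for each primal-dual pair will already force $\Phi\ge 0$ on the feasible set, and the theorem asserts that $\Phi$ attains the value $0$ exactly at liftings of Nash equilibria.

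For part~(a), given a Nash equilibrium $(f^*,g^*)$ I would first form $x^*=\pi_\beta^2(\gamma,g^*)$ via \eqref{SGCT_disc_occ_measure}; by completeness of stationary strategies $x^*\in Q^\beta(\gamma)$, so $(iii)$ and $(viii)$ hold, and feasibility of the Nash pair gives $(iv)$--$(vii)$. Since $g^*$ is a best response to $f^*$ and $f^*$ a best response to $g^*$, $x^*$ is an optimal solution of \eqref{SGCT_disc_brp2} with data $f^*$ and $f^*$ is an optimal solution of \eqref{SGCT_disc_brp1} with data $g^*$ (the best responses in the induced CMDP coincide with the optimal solutions of these linear programs, as in the average-cost case, cf.~\cite{Shwartz}, \cite{Altman2}). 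Strong LP duality then supplies dual optimizers $(u^*,\delta^{2*})$ of \eqref{SGCT_disc_dual2} and $(z^*,\delta^{1*})$ of \eqref{SGCT_disc_dual1}; their feasibility gives $(i),(ii),(ix),(x)$, and equality of primal and dual values gives $f^{*T}\boldsymbol{C}^1x^*=\textbf{1}_{|S|}^Tz^*-(\delta^{1*})^T\xi^1$ and $f^{*T}\boldsymbol{C}^2x^*=(1-\beta)\gamma^Tu^*-(\delta^{2*})^T\xi^2$, so $\Phi(\eta^*)=0$. To see $\eta^*$ is a global minimum I would show $\Phi(\eta)\ge 0$ at every feasible $\eta$: multiplying $(ii)$ by $f(s,a^1)\ge 0$ and summing over $\mathcal{K}^1$, then using $(iv),(vi),(vii),(ix)$, yields $f^T\boldsymbol{C}^1x\ge \textbf{1}_{|S|}^Tz-(\delta^1)^T\xi^1$; multiplying $(i)$ by $x(s,a^2)\ge 0$ and summing over $\mathcal{K}^2$, the discounted flow equation $(iii)$ telescopes the transition term and turns the left side into $(1-\beta)\gamma^Tu$, so with $(v)$ and $(x)$ one obtains $f^T\boldsymbol{C}^2x\ge (1-\beta)\gamma^Tu-(\delta^2)^T\xi^2$; adding gives $\Phi(\eta)\ge 0$.

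For part~(b), if $\eta^*$ is a global minimum with $\Phi(\eta^*)=0$, then since the two bracketed terms of $\Phi$ are individually nonnegative at any feasible point by the two inequalities just derived and they sum to zero, each vanishes at $\eta^*$. Fixing $x^*$ and rerunning the summation argument against an arbitrary $1$-feasible $f$ gives $f^{*T}\boldsymbol{C}^1x^*\le f^T\boldsymbol{C}^1x^*$; fixing $f^*$ and rerunning it against an arbitrary $x\in Q^\beta(\gamma)$ that satisfies $(v)$ gives $f^{*T}\boldsymbol{C}^2x^*\le f^{*T}\boldsymbol{C}^2x$. Recognizing $f^{*T}\boldsymbol{C}^ix=C_\beta^i(\gamma,f^*,g)$ when $g$ is recovered from $x$ by \eqref{SGCT_disc_2nd_plstrg}, and using again that every $2$-feasible stationary $g$ arises from such an $x$ (completeness), these two inequalities are exactly \eqref{SGCT_NE_def1}--\eqref{SGCT_NE_def2} for the discounted criterion, so $(f^*,g^*)$ with $g^*$ given by the displayed formula is a Nash equilibrium of $G^c$.

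I expect the only genuine subtleties, beyond routine transcription from the average-cost proof, to be bookkeeping: checking that $\pi_\beta^2(\gamma,g^*)$ really lies in $Q^\beta(\gamma)$ and is optimal for \eqref{SGCT_disc_brp2}, verifying the cost-coefficient identities $[\boldsymbol{C}^1(s)x(s)]_{a^1}=c^1(s,a^1;g)$ and $[(f(s))^T\boldsymbol{C}^2(s)]_{a^2}=c^2(f;s,a^2)$ (and likewise for $\boldsymbol{D}^{2,l}$) so that [MP2] is indeed the union of the best-response programs, and keeping track of the fact that—unlike [MP1]—there is no scalar variable $v$ here, because the discounted flow constraint $(iii)$ already enforces $\sum_{(s,a^2)}x(s,a^2)=1$ and the role of $v$ is played by the term $(1-\beta)\gamma^Tu$. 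The discount factor $\beta$ enters only through the flow constraint $(iii)$ and this objective term, so no new analytic difficulty arises.
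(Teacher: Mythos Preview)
Your proposal is correct and follows exactly the route the paper intends: the paper's own proof is a one-line pointer to Theorem~\ref{SGCTMain_avg_thm}, replacing the average best-response linear programs by the discounted ones \eqref{SGCT_disc_brp1}--\eqref{SGCT_disc_dual2}, and you have spelled out precisely that argument, including the correct handling of the telescoping via the discounted flow equation~$(iii)$ and the observation that the scalar $v$ of [MP1] is absorbed into $(1-\beta)\gamma^{T}u$.
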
 
\begin{proof}
 We can prove this by using the best response linear programs \eqref{SGCT_disc_brp1}, \eqref{SGCT_disc_dual1}, \eqref{SGCT_disc_brp2},\eqref{SGCT_disc_dual2} and with similar argument given in the proof of Theorem \ref{SGCTMain_avg_thm}. 
\end{proof}
\begin{remark}
 Similar to [MP1], 
[MP2] is also a non-convex constrained optimization problem. 
\end{remark}
\begin{remark}
Both [MP1] and [MP2] can be obtained from single mathematical program [MP4] given in Appendix (A). 
\end{remark}

\subsubsection{Special cases}
We consider two special cases. First, we consider nonzero sum game as defined in Section \ref{SGCT} with discounted cost criterion where the constraints of player 2 do not depend on the strategies of the player 1. Next, we briefly consider the zero sum game as considered in \cite{Altman}.
\subsubsection*{(i) Quadratic program in case of decoupled constraints}
When the constraints of player 2 do not depend on the strategies of player 1, i.e., under condition \eqref{SGCT_LPeq} the mathematical program [MP2] reduces to a quadratic program [QP2] given below
{\allowdisplaybreaks
\begin{align*}
&\textup{{[QP2]}} \quad \min_{\eta}\Big[\left(f^T\boldsymbol{C}^1x-\left(\textbf{1}_{|S|}^T z -(\delta^1)^T\xi^1\right)\right)
 \\ & \hspace{3cm}+\left(f^T\boldsymbol{C}^2x-\left((1-\beta)\gamma^T u-(\delta^2)^T\xi^2\right)\right)\Big] \\
&\text{s.t.}\\
& (i) ~  u(s)\leq \left[(f(s))^T\boldsymbol{C}^2(s)\right]_{a^2}
+\sum_{l=1}^{n_2}\delta^2_l d^{2,l}(s,a^2)\\
&\hspace{4cm}+\beta\sum_{s'\in S} p(s'|s,a^2)u(s'), \; \; \forall \;\; s\in S,\; a^2\in A^2(s)\\
& (ii) ~ z(s)\leq \left[\boldsymbol{C}^1(s)x(s)\right]_{a^1}+\sum_{k=1}^{n_1}\delta_k^1 d^{1,k}_{sub}(s,a^1),\;\forall\;s\in S, a^1\in A^1(s)\\
& (iii)~\sum_{(s,a^2)\in \mathcal{K}^2}\left[\delta(s,s')-\beta p(s'|s,a^2)\right]x(s,a^2)=(1-\beta)\gamma(s'), \;\;\forall \;\; s'\in S\\
& (iv) ~ \sum_{(s,a^1)\in \mathcal{K}^1}d^{1,k}_{sub}(s,a^1)f(s,a^1)\leq \xi_k^1, \;\;\forall \;\; k=1,2,\cdots,n_1\\
& (v)  ~ \sum_{(s,a^2)\in\mathcal{K}^2}d^{2,l}(s,a^2)x(s,a^2)\leq \xi_l^2, \;\;\forall \;\; l=1,2,\cdots,n_2\\
& (vi) ~ \sum_{a^1\in A^1(s)} f(s,a^1)=1,  \;\;\forall \;\; s\in S\\
& (vii) ~ f(s,a^1)\geq 0, \;\;\forall \;\; s\in S,\;a^1\in A^1(s) \\ 
& (viii) ~   x(s,a^2)\geq 0, \;\; \forall \;\; s\in S,\;a^2\in A^2(s)\\
& (ix)  ~ \delta_k^1\geq 0, \;\; \forall \;\; k=1,2,\cdots,n_1\\
& (x) ~ \delta^2_l\geq 0,  \;\; \forall \;\; l=1,2,\cdots,n_2.
\end{align*}
}
\subsubsection*{(ii) Zero sum single controller constrained stochastic games} 
The zero sum single controller constrained stochastic game with discounted cost criterion is considered in \cite{Altman}. In \cite{Altman}, the first player has subscription based constraints and second player has realization based constraints which do not depend on the strategies of first player and these games can be solved by a linear program. Setting $\boldsymbol{C}^1(s) =-\boldsymbol{C}^2(s)=\boldsymbol{C}(s)$  for all $s\in S$ the quadratic program [QP2] can be separated into primal-dual pair of linear programs which are same as given in \cite{Altman}.

\subsection{A Numerical Example}\label{SGCT_num}
We give one numerical example where immediate costs of player 2 corresponding to his constraints
do not depend on the actions of player 1. We compute the Nash equilibrium of this game  by solving corresponding quadratic program.
The components of the stochastic game are 
\begin{enumerate}
\item The state space $S=\{1,2\}$. 
\item The action sets of both the players are $A^i(s)=\{1,2\}$, $i=1,2$, $s=1,2$.
\item The immediate costs of both the players that defines their expected cost which
they want to minimize and transition probabilities of the game are given in the Table \ref{SGCT_CTP1} 
and \ref{SGCT_CTP2}. 
\begin{center}
\begin{table}[ht]
\caption{Immediate costs and transition probabilities} 
\centering
\begin{minipage}[]{5cm}
\subtable[$s=1$]{\label{SGCT_CTP1}
\begin{tabular}{|l|r|}
\hline
\backslashbox{(5,4)}{($\frac{1}{2}$, $\frac{1}{2}$)} & \backslashbox{(6,3)} {($\frac{1}{3}$,$\frac{2}{3}$)}\\
\hline
\backslashbox{(7,3)}{($\frac{1}{2}$, $\frac{1}{2}$)} &\backslashbox{(4,6)}{($\frac{1}{3}$,$\frac{2}{3}$)}\\
\hline
\end{tabular}
}
\end{minipage} 
\hspace{1cm}
\begin{minipage}[]{5cm}
\subtable[$s=2$]{\label{SGCT_CTP2}
\begin{tabular}{|l|r|}
\hline
\backslashbox{(2,3)}{(1, 0)} & \backslashbox{(4,2)} {($\frac{1}{5}$,$\frac{4}{5}$)}\\
\hline
\backslashbox{(3,1)}{(1,0)} &\backslashbox{(3,4)}{($\frac{1}{5}$,$\frac{4}{5}$)}\\
\hline
\end{tabular}
}
\end{minipage}
\end{table}
\end{center}
In both the tables above, the entry in upper triangle in each box gives the transition
probabilities and the entry in lower triangle gives the immediate cost of both the
players corresponding to the actions chosen by both the players in that state. For
example, at state 1 when both the player choose action $1$, then, player 1 gets immediate
cost 5 and player 2 gets 4 and this is represented by entry $(5, 4)$ and game will
remain in state 1 with probability $\frac{1}{2}$ and it can move to state 2 with probability $\frac{1}{2}$ and this is 
represented by entry $(\frac{1}{2}, \frac{1}{2})$ in the table corresponding to state 1. It is easy to check that transition probabilities given in tables above satisfies the ergodicity assumption (A1).

\item Both the players have one constraint, i.e., player 1 has one subscription based constraint and player 2 has 
one realization based constraint. The subscription cost of player 1 and immediate cost of player 2 
corresponding to each state-action pair are given in Table \ref{SGCT_sub_const} and \ref{SGCT_real_const} respectively.
\begin{table}[ht]%
\caption{Costs defining constraints}
\centering
\subtable [$d^1_{sub}(s,a^1)$]{\label{SGCT_sub_const}
\begin{tabular}{|c|c|c|}\hline
{} & $s=1$ & $s=2$ \\ \hline
$a^1=1$ & 2 & 3 \\ \hline
$a^1=2$ & 3 & 1 \\ \hline
\end{tabular}}
\hspace{1cm}
\subtable[$d^2(s,a^2)$]{\label{SGCT_real_const}
\begin{tabular}{|c|c|c|}\hline
{} & $s=1$ & $s=2$ \\ \hline
$a^2=1$ & 1 & 4 \\ \hline
$a^2=2$ & 2 & 5 \\ \hline
\end{tabular}}
\end{table}
\item The bound defining constraints are $\xi^1=4, \xi^2=2.5$.
\end{enumerate}
From Table \ref{SGCT_CTP1} and \ref{SGCT_CTP2} it is clear that the game is controlled by player 2.
\begin{itemize} 
\item[(i)]  For average cost criterion we solve the quadratic program [QP1] corresponding to the above data, by using MATLAB and obtain  
\begin{gather*}
 \eta^*=(3.0278,4.1667,2.833,3.8667,1.3067,0.6944,0.3056,0.3472,0.6528,\\
0.2667,0.36,0.3733,0,0.1867,0).
\end{gather*}
Note that at $\eta^*$ the objective function value is zero and hence it is the global minimum   
of quadratic program. We have
 $x^*(1,1)=0.2667$, $x^*(1,2)=0.36$, $x^*(2,1)=0.3733$,
$x^*(2,2)=0$. From \eqref{SGCT_avg_2nd_plstrg} we have $g^*(1,1)=0.4256$, $g^*(1,2)=0.5744$, $g^*(2,1)=1$, $g^*(2,2)=0$.
From Theorem \ref{SGCTMain_avg_thm}$(b)$ 
the Nash equilibrium of constrained stochastic game defined above with average cost criterion is
\begin{gather*}
f^*=\left((0.6944,0.3056),(0.3472,0.6528)\right), \;\; 
g^*=\left((0.4256,0.5744),(1,0)\right)
\end{gather*}
and the average costs of both the players at Nash equilibrium $(f^*,g^*)$ are
\begin{gather*}
 C_{ea}^1(f^*,g^*)= 4.4268\\
C_{ea}^2(f^*,g^*)= 3.0279.
\end{gather*}

\item [(ii)] For discounted cost criterion we take $\beta=0.5$, $\gamma=(0.5,0.5)$.
We solve the quadratic program [QP2] corresponding to the above data, by using MATLAB and obtain  
\begin{gather*}
 \eta^*=(10.2222,10.8888,3.5833, 1.4583,1,0,0.5,0.5,
0.3333,0.25,0.4167,0,\\0.2083,0.9444).
\end{gather*}
Note that at $\eta^*$ the objective function is zero and hence it is the global minimum of  
 quadratic program. 
We have $x^*(1,1)=0.3333$, $x^*(1,2)=0.25$, $x^*(2,1)=0.4167$, $x^*(2,2)=0$. From \eqref{SGCT_disc_2nd_plstrg} 
we have $g^*(1,1)=0.5714$, $g^*(1,2)=0.4286$, $g^*(2,1)=1$, $g^*(2,2)=0$. 
From Theorem \ref{SGCT_disc_Main_thm}$(b)$ 
the Nash equilibrium of constrained stochastic game defined above with discounted cost criterion is
\begin{gather*}
f^*=\left((1,0),(0.5,0.5)\right), \;\;
g^*=\left((0.5714,0.4286),(1,0)\right)
\end{gather*}
and the discounted costs of both the players at Nash equilibrium $(f^*,g^*)$ are
\begin{gather*}
 C_\beta^1(\gamma,f^*,g^*)= 4.2082\\
 C_\beta^2(\gamma,f^*,g^*)=  2.9166.
\end{gather*}
\end{itemize}

\section{Constrained stochastic game with independent state processes} \label{IND_model}
In this section we consider a $N$-player constrained stochastic game with independent state processes as discussed in \cite{Altman3}. In these games each player controls his own Markov chain, whose transition probabilities do not depend on the states and actions of other players. In these games at any time, each player has information only about current and past states of his Markov chain as well as of his previous actions and does not have any information about the states and actions of other players. However, each player wants to minimize his expected average cost that depends on the strategies of all the players. The expected average constraints of each player also depend on the strategies of all the players. These games come under the class of decentralized stochastic games.       
                                                  
The game is described by the tuple $\left(S^i,\gamma^i,A^i,c^i,d^i,p^i,\xi^i\right)$, $i=1,2,\cdots,N$, where: 
\begin{enumerate}
\item[(i)] $S^i$ is the finite state space of player $i$, $i = 1, \cdots, N$. The generic element of $S^i$ is denoted by $s^i$.  
Define, $S$:={\large \texttimes}$_{j=1}^N S^j$ and $S^{-i}$:={\large \texttimes}$_{j\neq i}S^j$ ({\large \texttimes} $\;$ stands for the product space). The element of $S$ is denoted by $s$ where $s=(s^1,s^2,\cdots,s^N)$ and $s^{-i}\in S^{-i}$ denote the vector of states $s^j$, $j\neq i$.

\item[(ii)] $\gamma^i$ is the probability distribution for the initial state of player $i$, $i = 1, \cdots, N$. We assume that the initial  states of all the players are independent. Denote $\gamma=(\gamma^1,\gamma^2,\cdots,\gamma^N)$.

\item[(iii)] $A^i$ is the finite action (strategy) set of player $i$ and its element is denoted by $a^i$, $i = 1, \cdots, N$.
 $A^i(s^i)$ denotes the set of all actions of player $i$ at state $s^i$ and 
  $A^i=\bigcup_{s^i\in S^i}A^i(s^i)$.
We denote $a=(a^1,a^2,\cdots,a^N)$ and $a^{-i}$ as vector of actions $a^j$, $j\neq i$. 

\item [(iv)] Define, $\mathcal{K}^i=\{(s^i,a^i)|s^i\in S^i, a^i\in A^i(s^i)\}$, $i=1,2,\cdots,N$ and 
$\mathcal{K}$={\large \texttimes}$_{i=1}^N \mathcal{K}^i$, $\mathcal{K}^{-i}$={\large \texttimes}$_{j\neq i} \mathcal{K}^j$.

\item [(v)] $c^i:\mathcal{K}\rightarrow\mathbb{R}$ is immediate cost of player $i$, $i = 1, \cdots, N$. Specifically,  
 $c^i(s,a)$ is the immediate cost incurred by player $i$, $i=1,2,\cdots,N$, when state of players is $(s^1, s^2,\cdots,s^N)$ and actions chosen by them are $(a^1,a^2,\cdots,a^N)$ respectively. Each player $i$, $i=1,2,\cdots,N$, wants to minimize the expected average cost involving $c^i(\cdot, \cdot)$.

\item [(vi)] $d^i=\left(d^{i,1},d^{i,2},\cdots,d^{i,n_i}\right)$, where $d^{i,k}:\mathcal{K}\rightarrow\mathbb{R}$ for all $k=1,2,\cdots,n_i$ are immediate costs of player $i$, $i = 1, \cdots, N$. These $d^{i,k}(\cdot, \cdot)$ are involved in the $k$th constraint, $k=1,2,\cdots,n_i$, on expected average cost of player $i$, $i = 1, \cdots, N$.

\item [(vii)] $p^i:\mathcal{K}^i\rightarrow \wp(S^i)$ is the transition probability of player $i$, $i = 1, \cdots, N$, where   $p^i(\bar{s}^i|s^i,a^i)$ is the probability that the 
state of player $i$ moves from state $s^i$ to $\bar{s}^i$ if he chooses action $a^i\in A^i(s^i)$.

\item [(viii)] $\xi^i=\left(\xi_1^i,\xi_2^i,\cdots,\xi_{n_i}^i\right)$ are the bounds defining the 
constraints of player $i$, $i = 1, \cdots, N$. 
\end{enumerate}

The game dynamics are as follows. Initially, at time  $t = 0$ state of the game is $ s = (s^1,s^2,\cdots,s^N)$ where $s^i\in S^i$ is chosen according to independent random variables $\gamma^i$, $i=1,2,\cdots,N$. Players independently choose actions $a = (a^1,a^2,\cdots,a^N)$  with $a^i\in A^i(s^i)$, $i=1,2,\cdots,N$. Player $i$ obtains an immediate cost $c^i(s,a)$, $i=1,2,\cdots, N$.
Apart from this cost, player $i$, $i=1,2,\cdots,N$, also receives  another $n_i$ costs $\{d^{i,k}(s,a)\}$, $k=1,2,\cdots,n_i$. 
These $\{d^{i,k}(\cdot, \cdot)\}$, $k=1,2,\cdots,n_i$, are involved in the  
expected average cost functionals of player $i$ which are constrained by specified bounds $\{\xi^i_k\}$, $k = 1, \cdots, n_i$.
Now, the state of player $i$ switches to a new state $\bar{s}^i$ at time $t=1$ with probability 
$p^i(\bar{s}^i|s^i,a^i)$, $i = 1, \cdots, N$. At time $t= 1$, in state $\bar{s}^i$, player $i$ then independently chooses an action $\bar{a}^i$, receives costs $c^i(\bar{s}, \bar{a})$ and $\{d^{i, k}(\bar{s}, \bar{a})\}$, $k = 1, \cdots, n_i$ and $i = 1, \cdots, N$. The next state for this player is $\tilde{s}^i$ with probability $p^i(\tilde{s}^i | \bar{s}^i, \bar{a}^i)$. 
The dynamics of the Markov chains repeat at new state $\tilde{s} = (\tilde{s}^1, \cdots, \tilde{s}^N)$ and game continues for infinite time horizon.  

While transition probabilities depend only on the present state and action used, actions that are used can depend on `past', as in history dependent strategies. Define a history of player $i$, $i=1,2,\cdots,N$, at time $t$ as 
$h_t^i=(s^i_0,a_0^i,s_1^i,a_1^i, \cdots, s^i_{t-1},a_{t-1}^i,s_t^i)$
where $s_t^i\in S^i$, $a_t^i\in A^i(s_t^i)$, $i=1,2,\cdots,N$, $t=0,1,2,\cdots$.
Let $H_t^i$ denote the set of all possible histories of length $t$ of player $i$.
Each player observes his own history and does not have any information about the other player's history.
A decision rule $f_t^i:H_t^i\rightarrow \wp(A^i(s_t^i))$ of player $i$ at time $t$ is a function which assigns to each history of length $t$ of player $i$, a probability measure over action set of player $i$. This means that under decision rule $f_t^i$ player $i$ chooses action $a^i$ with probability $f_t^i(h_t^i,a^i)$. 
The sequence of decision rules is called the strategy of the player.
Let $f^{ih}= (f_0^i,f_1^i,\cdots,f_t^i,\cdots)$ denote the strategy of player $i$, $i=1,2,\cdots,N$, and is called history dependent (behavioral) strategy. Note that the strategies of players do not depend on the 
realizations of the costs. If strategies were allowed to depend on such costs, then a player 
could use the costs to estimate the state and actions of the other players.  

Let $F^i$ denote the set of all history dependent strategies of player $i$ 
and $F$~={\large \texttimes}$_{i=1}^N F^i$ be the class of history dependent multi-strategies.
These strategies are called Markovian if at every decision epoch the decision rule depends 
only on the current state but the decision rule can differ at every epoch. 
A stationary strategy is a Markovian strategy which is independent of the time, i.e.,
at every decision epoch the decision rule is same. So, for stationary strategy 
$f_t^i=f^i$ for all $t$, i.e., $(f^i,f^i,f^i,\cdots)$ 
is a stationary strategy of player $i$. We denote, with 
some abuse of notations, $f^i$ as the stationary strategy of player $i$. 
Let $F_{S^i}$  denote the set of all stationary strategies of player $i$ and $F_S$={\large \texttimes}$_{i=1}^N F_{S^i}$ denote the class of stationary multi-strategies. For, $i=1,2,\cdots,N$, stationary strategy $f^i\in F_{S^i}$ is identified 
with $f^i=\left((f^i(1))^T,(f^i(2))^T,\cdots,(f^i(|S^i|))^T\right)^T$, where 
$f^i(s^i)=\left(f^i(s^i,1),f^i(s^i,2),\cdots,f^i\left(s^i,|A^i(s^i)|\right)\right)^T$
for all $s^i\in S^i$. For all $s^i\in S^i$, $f^i(s^i,a^i)$ is
then, the probability of choosing action $a^i\in A^i(s^i)$ by player $i$, $i = 1, \cdots, N$. 
For $f^h\in F$ we denote $f^{-ih}$ as the vector of strategies $f^{jh}$, $j\neq i$, and for any $g^{ih}\in F^i$ we define 
$(f^{-ih},g^{ih})$ to be the multi-strategy, where, for $j\neq i$, player $j$ uses $f^{jh}$ and player $i$ uses $g^{ih}$. 
Under mild assumptions, which we also make, Altman, {\it et al} \cite{Altman3} show that a Nash equilibrium exists for the above constrained stochastic game within the class of stationary strategies.

This leads to the introduction of vector stochastic process $\{X_t,\mathbb{A}_t\}_{t=0}^{ \infty}$, where
$X_t=\left(X_t^1,X_t^2,\cdots,X_t^N\right)$, $\mathbb{A}_t=\left(\mathbb{A}_t^1,\mathbb{A}_t^2,\cdots,\mathbb{A}_t^N\right)$, $X_t^i$ denote the state of the player $i$ and $\mathbb{A}_t^i$ denote the action chosen by player $i$ at time $t$, $t = 0, 1, \cdots$.
An initial distribution $\gamma$ together with multi-strategy $f^h\in F$  
defines a unique probability measure 
$\mathbb{P}_{f^h}^\gamma$ on an appropriate probability space with respect to which the laws of 
vector stochastic process
$\{X_t,\mathbb{A}_t\}_{t = 0}^{\infty}$ of states and actions can be defined. 
The expectation operator on this probability space is denoted by $\mathbb{E}_{f^h}^\gamma$.

\subsubsection*{The expected average costs}
These costs are average functionals of states and actions of  all the players and each player minimizes his cost functionals. 
For given initial distribution $\gamma$ and multi-strategy $f^h$ the expected average cost of player $i$, $i=1,2,\cdots,N$ 
is defined as
\begin{equation}\label{IND_cost}
 C_{ea}^i(\gamma,f^h)=\limsup_{T\rightarrow \infty}\frac{1}{T}\sum_{t=0}^{T-1}\mathbb{E}_{f^h}^\gamma 
c^i(X_t,\mathbb{A}_t).
\end{equation}

\subsubsection*{The expected average constraints}
The constraints of each player are defined by average functionals of states and actions of all the players which are bounded by given reals. For given initial distribution $\gamma$ and multi-strategy $f^h$ the expected average costs of player $i$, $i=1,2,\cdots N$ are defined as 
\begin{equation*}
 D_{ea}^{i,k}(\gamma,f^h)=\limsup_{T\rightarrow \infty}\frac{1}{T}\sum_{t=0}^{T-1}\mathbb{E}_{f^h}^\gamma 
d^{i,k}(X_t,\mathbb{A}_t), \;\; \forall \; k=1,2,\cdots,n_i.
\end{equation*}
$D_{ea}^{i,k}(\cdot,\cdot)$ can capture the average consumption of resource $k$, $k=1,2,\cdots,n_i$, by player $i$, $i=1,2,\cdots, N$. 
The constraints of player $i$, $i=1,2,\cdots,N$ are given as 
\begin{equation}\label{IND_real}
D_{ea}^{i,k}(\gamma,f^h)\leq \xi_k^i, \; \; \forall  \; k=1,2,\cdots,n_i.
\end{equation}
The constraints \eqref{IND_real} captures the fact that average consumption of resource $k$ by player $i$, $i=1,2,\cdots, N$, when player $i$ uses strategy $f^{ih}$ and other players use $f^{-ih}$ is not more than given $\xi_k^i$, $k=1,2,\cdots,n_i$.  

All the players choose their strategies independently and want to minimize their 
expected average cost from \eqref{IND_cost} subject to their constraints from \eqref{IND_real}. 
We denote this constrained stochastic game by $G_{ea}^c$.
The multi-strategy  $f^h=~(f^{1h},f^{2h},\cdots,f^{Nh})$ is called $i$-feasible if it satisfies $i$th player's constraints       
from \eqref{IND_real} and it is called feasible if it is $i$-feasible for every $i=1,2,\cdots,N$. 
Let $F^\xi$ denote the set of all feasible history dependent multi-strategies and 
$F^\xi_S$ denote the set of all stationary feasible multi-strategies for the constrained stochastic game $G_{ea}^c$. We shall assume throughout that $F_S^\xi$ is non-empty. 

We now recall the definition of Nash equilibrium as given in \cite{Altman3}. 
A multi-strategy $f^{h*}\in F^\xi$ is called  the Nash equilibrium of the constrained 
stochastic game $G_{ea}^c$, if for each player $i=1,2,\cdots,N$ and for any $f^{ih}$ such that $(f^{ih},f^{-ih*})$ is $i$-feasible, one has that 
\begin{gather*}\label{IND_NE_def}
C_{ea}^i(\gamma,f^{h*})\leq C_{ea}^i(\gamma,f^{ih},f^{-ih*}).
\end{gather*}
Thus, unilateral deviation by any player $i$, $i = 1, \cdots, N$ from equilibrium strategy $f^{h*}$ is not possible, because in that case, either at least one of his constraints will be violated or it will result in a cost for player $i$ that is not lower than the one achieved by feasible equilibrium strategy $f^{h*}$. 
A stationary multi-strategy $f^*\in F_S^\xi$ is said to be Nash equilibrium of constrained stochastic game $G_{ea}^c$, if 
for each player $i=1,2,\cdots,N$ and for any $f^{i}$ such that $(f^{i},f^{-i*})$ is $i$-feasible, one has that 
\begin{gather*}\label{IND_SNE_def}
C_{ea}^i(\gamma,f^{*})\leq C_{ea}^i(\gamma,f^{i},f^{-i*}).
\end{gather*}
 This can be seen by noticing that when all players $j$, $j\neq i$, fix their strategy as a stationary strategy then 
player $i$ is faced with a constrained Markov decision process (CMDP) where
optimal strategy always exists in the space of stationary strategies  \cite{Altman2}.

\subsubsection*{Assumptions [Altman, et al. \cite{Altman3}]}
As similar to \cite{Altman3} we also have the following assumptions:
\begin{enumerate}
\item[(A1)] Ergodicity: For each player $i$, $i = 1, \cdots, N$, and for any stationary strategy $f^i$ the state process
of player $i$ is an irreducible Markov chain with one ergodic class (and possibly some transient states).
\item[(A2)] Strong Slater condition: Every player $i$, $i = 1, \cdots, N$ has some strategy $g^i$ such that for any multi-strategy $f^{-i}$ of other players,
\begin{gather*}
D^{i,k}_{ea}(\gamma,(f^{-i},g^{i}))<\xi_k^i, \; \; \forall \; k=1,2,\cdots,n_i. 
\end{gather*}
\item[(A3)] The players do not observe their costs, i.e., the strategy chosen by any player does not depend on the realization of the cost.
\end{enumerate}
The last assumption is due to the definition of the strategies. If strategies were allowed to depend on the realization of the costs, then a player can use the cost to estimate other player's states and actions. 
As the Nash equilibrium exists in stationary strategies under the assumptions (A1)-(A3) \cite{Altman3}, from now onwards we restrict ourselves to the class of stationary strategies. 

\subsection{Average occupation measure}\label{IND_def_not}
For each player $i$, $i=1,2,\cdots,N$, using a stationary strategy $f^i$ and initial distribution $\gamma^i$ 
define the average occupation measure as
\begin{gather*}
\pi_{ea}^i(\gamma^i,f^i):= \left\{\pi_{ea}^i(\gamma^i,f^i;s^i,a^i): s^i\in S^i, a^i\in A^i(s^i)\right\}.
\end{gather*}
For all $s^i\in S^i$, $a^i\in A^i(s^i)$,
$\pi_{ea}^i(\gamma^i,f^i;s^i,a^i)$ is given by
\begin{equation}\label{IND_occ_measure}
\pi_{ea}^i(\gamma^i,f^i;s^i,a^i)= \pi^{f^i}(s^i)f^i(s^i,a^i) 
\end{equation}
where $\pi^{f^i}=\left(\pi^{f^i}(1),\pi^{f^i}(2),\cdots,\pi^{f^i}(|S^i|)\right)$ is the unique steady state distribution of Markov chain induced by strategy $f^i$ of player $i$,
which exists under (A1). $\pi_{ea}^i(\gamma^i,f^i)$ can be considered as probability measure over $\mathcal{K}^i$ that assigns probability $\pi_{ea}^i(\gamma^i,f^i;s^i,a^i)$ to state-action pair $(s^i,a^i)$. The occupation measure defined in \eqref{IND_occ_measure} is unique and independent from initial distribution $\gamma^i$, so, we drop $\gamma^i$ from the notation.
For any multi-strategy $f\in F_{S}$
the expected average costs for each player $i$, $i=1,2,\cdots N$, can be written in terms of occupation measure as
\begin{equation*}
C_{ea}^i(f)= \sum_{(s,a)\in \mathcal{K}}\left[\prod_{j=1}^N \pi_{ea}^j(f^j;s^j,a^j)\right]c^i(s,a).
\end{equation*}
\begin{equation*}
 D_{ea}^{i,k}(f)=\sum_{(s,a)\in \mathcal{K}}\left[\prod_{j=1}^N \pi_{ea}^j(f^j;s^j,a^j)\right]d^{i,k}(s,a),\;\;\forall\; k=1,2,\cdots,n_i. 
\end{equation*}
Let $Q_{ea}^i$, $i=1,2,\cdots,N$, be the set of vectors $x^i\in \mathbb{R}^{|\mathcal{K}^i|}$ satisfying
\begin{equation*}\label{IND_ach_occmsure}
\left\{ 
\begin{aligned}
&\sum_{(s^i,a^i)\in \mathcal{K}^i}\left(\delta(s^i,\bar{s}^i)- p^i(\bar{s}^i|s^i,a^i)\right)x^i(s^i,a^i)= 0,
\;\;\forall \; \bar{s}^i\in S^i  \\
&\sum_{(s^i,a^i)\in \mathcal{K}^i}x^i(s^i,a^i)=1\\
& x^i(s^i,a^i)\geq 0, \;\;\forall\; s^i\in S^i, a^i\in A^i(s^i). 
\end{aligned}
\right. 
\end{equation*}
The space of stationary strategies is complete, i.e., the set of occupation measures achieved by 
history dependent strategies equals to those achieved by stationary strategies and further equals to the set
$Q_{ea}^i$, $i=1,2,\cdots,N$ \cite{Altman2}. It is known that for each $(s^i,a^i)\in \mathcal{K}^i$, 
$x^i(s^i,a^i)= \pi_{ea}^i(f^i;s^i,a^i)$ where 
\begin{equation}\label{IND_plstrg}
 f^i(s^i,a^i)=\frac{x^i(s^i,a^i)}{\sum_{a^i\in A^i(s^i)}x^i(s^i,a^i)}
\end{equation}
whenever denominator is nonzero (when it is zero $f^i(s^i)$ is chosen arbitrarily from $\wp(A^i(s^i))$) \cite{Altman2}.

We use the following notations throughout this section.
For $i=1,2,\cdots,N$,  \\
 $\bullet ~ u^i=\left(u^i(1),u^i(2),\cdots,u^i\left(|S^i|\right)\right)^T$.\\
 $\bullet ~ v^i\in \mathbb{R}$. \\
$\bullet ~ x^i=\left(\left(x^i(1)\right)^T,\left(x^i(2)\right)^T,\cdots,\left(x^i\left(|S^i|\right)\right)^T\right)^T$.\\
 $\bullet ~ x^i(s^i)=\left(x^i(s^i,1),x^i(s^i,2),\cdots,x^i\left(s^i,|A^i(s^i)|\right)\right)^T$.\\ 
$\bullet ~ \delta^i=(\delta_1^i,\delta_2^i,\cdots,\delta_{n_i}^i)^T$.\\
The costs of player $i$, $i=1,2,\cdots, N$, when he uses action $a^i$ at state $s^i$ and other players use $f^{-i}$ is defined as in \cite{Altman3},
\begin{align*}
 c^i(f^{-i};s^i,a^i)&=\sum_{(s,a)^{-i}\in\mathcal{K}^{-i}}\left[\prod_{j=1;j\neq i}^{N}\pi_{ea}^j(f^j;s^j,a^j)\right]c^i(s,a). \\
d^{i,k}(f^{-i};s^i,a^i)&=\sum_{(s,a)^{-i}\in\mathcal{K}^{-i}}\left[\prod_{j=1;j\neq i}^{N}\pi_{ea}^j(f^j;s^j,a^j)\right]d^{i,k}(s,a),
\;\forall\; k=1,2,\cdots,n_i. 
\end{align*}
\subsection{Mathematical programming formulation}\label{IND_mathprgm}
We show the one to one correspondence between the stationary Nash equilibria of this game and the global minima of one 
mathematical program. 
\subsubsection*{Best response linear programs}
The best response of each player $i$, $i=1,2,\cdots,N$, against fixed stationary strategy $f^{-i}$ of other players 
is given by solving a constrained Markov decision model, 
which, in turn, can be obtained by a linear program in our setting \cite{Altman2}.
The best response of player $i$ against fixed strategy $f^{-i}$ of other players is given by the linear program below:
\begin{equation}\label{IND_brp}
 \left.
\begin{aligned}
& \min_{x^i}\sum_{(s^i,a^i)\in \mathcal{K}^i}c^i(f^{-i};s^i,a^i)x^i(s^i,a^i)\\
\text{s.t.}\\
&(i)~\sum_{(s^i,a^i)\in \mathcal{K}^i}\left(\delta(s^i,\bar{s}^i)- p^i(\bar{s}^i|s^i,a^i)\right)x^i(s^i,a^i)= 0, \;\; \forall\; \bar{s}^i\in S^i \\
&(ii)~\sum_{(s^i,a^i)\in \mathcal{K}^i}x^i(s^i,a^i)=1\\
&(iii)~\sum_{(s^i,a^i)\in \mathcal{K}^i}d^{i,k}(f^{-i};s^i,a^i)x^i(s^i,a^i)\leq \xi_k^i, \;\; \forall\; k=1,2,\cdots,n_i\\
&(iv)~ x^i(s^i,a^i)\geq 0, \;\; \forall\; s^i\in S^i, a^i\in A^i(s^i)
\end{aligned}
\right\}
\end{equation}
 If $x^{i*}$ is the optimal solution 
of the linear program \eqref{IND_brp}, then, the best response $f^{i*}$ of player $i$ can be obtained 
from \eqref{IND_plstrg} \cite{Altman2}.
The dual of linear program \eqref{IND_brp} is
\begin{equation}\label{IND_dual_brp}
 \left.
\begin{aligned}
&\max_{v^i,u^i,\delta^i} \left[v^i-\sum_{k=1}^{n_i}\delta^i_k\xi_k^i\right]\\
\text{s.t.}\\
&(i)~ v^i+u^i(s^i)\leq c^i(f^{-i};s^i,a^i)+\sum_{k=1}^{n_i}d^{i,k}(f^{-i};s^i,a^i)\delta^i_k\\
&\hspace{3cm}+\sum_{\bar{s}^i\in S^i}p^i(\bar{s}^i|s^i,a^i)u^i(\bar{s}^i),\;\;\forall\; s^i\in S^i, a^i\in A^i(s^i)\\
&(ii)~\delta_k^i\geq 0,\;\;\forall\; k=1,2,\cdots,n_i.\\ 
\end{aligned}
\right\}
\end{equation}
By using $N$ primal-dual pair of linear programs given by \eqref{IND_brp}, \eqref{IND_dual_brp}, we show the one to one correspondence between the stationary Nash equilibria of constrained stochastic game $G_{ea}^c$ and global minima of a mathematical program [MP3].   Let $\zeta^T:= (v^i,(u^i)^T,(x^i)^T,(\delta^i)^T)_{i=1}^N$ and $\psi(\zeta)$ denote the decision variables and the objective function of [MP3] respectively. $\zeta^T$ is a $1\times \big(N + \sum_{i=1}^N |S^i| + \sum_{i=1}^N \sum_{s^i\in S^i}|A^i(s^i)|+\sum_{i=1}^N n_i\big)$ dimensional vector.
\begin{theorem}\label{IND_Main_thm}
\begin{enumerate}
\item [(a)] If $(f^{i*})_{i=1}^N$ is a Nash 
equilibrium of the constrained stochastic game $G_{ea}^c$, then, there exists a vector 
$\zeta^{*T}=\left(v^{i*},(u^{i*})^T,(x^{i*})^T,(\delta^{i*})^T\right)_{i=1}^N$
such that it is a global minimum of mathematical program \textup{{[MP3]}} given below   
{\allowdisplaybreaks
\begin{align*}
&\textup{{[MP3]}} \quad \min_{\zeta}\sum_{i=1}^N\left[\sum_{(s,a)\in \mathcal{K}}\left(\prod_{j=1}^N x^j(s^j,a^j)\right)c^i(s,a) -\left(v^i-\sum_{k=1}^{n_i}\delta_k^i\xi_k^i\right)\right] \\ 
&\text{s.t.}\\
&(i) ~ v^i+u^i(s^i)\leq \sum_{(s,a)^{-i}\in\mathcal{K}^{-i}}\left(\prod_{j=1;j\neq i}^{N}x^j(s^j,a^j)\right)c^i(s,a)\\
&\hspace{2.2cm}+\sum_{k=1}^{n_i}\delta_k^i\left[\sum_{(s,a)^{-i}\in\mathcal{K}^{-i}}\left(\prod_{j=1;j\neq i}^{N}x^j(s^j,a^j)\right)
d^{i,k}(s,a)\right]\\
&\hspace{2.2cm}+\sum_{\bar{s}^i\in S^i}p^i(\bar{s}^i|s^i,a^i)u^i(\bar{s}^i),\;\forall\; s^i\in S^i, a^i\in A^i(s^i), i=1,2,\cdots,N\\
&(ii) ~ \sum_{(s^i,a^i)\in \mathcal{K}^i}\left(\delta(s^i,\bar{s}^i)- p^i(\bar{s}^i|s^i,a^i)\right)x^i(s^i,a^i)= 0, \;\forall\; \bar{s}^i\in S^i, i=1,2,\cdots,N\\
& (iii) ~ \sum_{(s^i,a^i)\in \mathcal{K}^i}x^i(s^i,a^i)=1,\;\;\forall\; i=1,2,\cdots,N\\
&(iv) ~ \sum_{(s,a)\in \mathcal{K}}\left(\prod_{j=1}^N x^j(s^j,a^j)\right)d^{i,k}(s,a)\leq \xi_k^i,\;\forall\; k=1,2,\cdots,n_i, i=1,2,\cdots,N\\
&(v) ~ x^i(s^i,a^i)\geq 0,\;\;\forall\; s^i\in S^i,\; a^i\in A^i(s^i), i=1,2,\cdots,N\\ 
&(vi) ~ \delta_k^i\geq 0,\;\;\forall\;  k=1,2,\cdots,n_i,\; i=1,2,\cdots,N.
\end{align*}
}
with $\psi(\zeta^*)=0$.

\item [(b)] If $\zeta^{*T}=\left(v^{i*},(u^{i*})^T,(x^{i*})^T,(\delta^{i*})^T\right)_{i=1}^N$
 is a global minimum of \textup{{[MP3]}} with $\psi(\zeta^*)=0$ then $(f^{i*})_{i=1}^N$ 
is a Nash equilibrium of the constrained stochastic game $G_{ea}^c$ where,
\begin{gather*}
 f^{i*}(s^i,a^i)=\frac{x^{i*}(s^i,a^i)}{\sum_{a^i\in A^i(s^i)}x^{i*}(s^i,a^i)}
\end{gather*}
for all  $s^i\in S^i$, $a^i\in A^i(s^i)$, $i=1,2,\cdots,N$ whenever the denominator is non-zero 
(when it is zero $f^{i*}(s^i)$ is chosen arbitrarily from $\wp(A^i(s^i))$).
\end{enumerate}
\end{theorem}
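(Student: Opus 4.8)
The plan is to mirror the proof of Theorem \ref{SGCTMain_avg_thm}, but with the single pair of primal--dual best response linear programs replaced by the $N$ pairs \eqref{IND_brp}--\eqref{IND_dual_brp}, one for each player. The objective $\psi(\zeta)$ of [MP3] is written precisely so that its $i$th summand equals the primal objective of \eqref{IND_brp} minus the dual objective of \eqref{IND_dual_brp}, once the factor $\prod_{j\neq i}x^j(s^j,a^j)$ is recognized as $c^i(f^{-i};s^i,a^i)$ (and similarly for $d^{i,k}$). The whole argument then reduces to: (i) equilibrium $\Rightarrow$ each of the $N$ LP pairs is at optimality, so each summand vanishes; (ii) feasibility of $\zeta$ alone already forces each summand to be nonnegative, via a ``multiply by $x^i$ and sum'' manipulation; and (iii) conversely, $\psi(\zeta^*)=0$ plus nonnegativity of each summand forces each LP pair to be at optimality, which recovers the best response property.

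For part (a), given the Nash equilibrium $(f^{i*})_{i=1}^N$ I would first build, for each $i$, the occupation measure $x^{i*}$ from $f^{i*}$ through \eqref{IND_occ_measure}; by the standard characterization of occupation measures \cite{Altman2} this $x^{i*}$ satisfies (ii), (iii), (v) of [MP3], and feasibility of the equilibrium gives (iv). Since $f^{i*}$ is a best response against $f^{-i*}$, the CMDP/LP correspondence (the analogue of Proposition 3.1(ii) of \cite{Shwartz} used in the single controller case, available here from \cite{Altman3} and \cite{Altman2}) makes $x^{i*}$ an optimal solution of \eqref{IND_brp} with $f^{-i*}$ fixed; strong LP duality \cite{Bertsimas},\cite{Bazaraa} then supplies an optimal dual solution $(v^{i*},u^{i*},\delta^{i*})$ satisfying (i) and (vi) with equal primal and dual values, i.e. $\sum_{(s,a)\in\mathcal{K}}\bigl(\prod_j x^{j*}(s^j,a^j)\bigr)c^i(s,a)=v^{i*}-\sum_k\delta_k^{i*}\xi_k^i$. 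Summing over $i$ gives $\psi(\zeta^*)=0$. To finish, I would show $\psi(\zeta)\ge 0$ on the whole feasible set: fixing $i$ and any feasible $\zeta$, multiply constraint (i) at $(s^i,a^i)$ by $x^i(s^i,a^i)\ge 0$, sum over $\mathcal{K}^i$, cancel the $u^i$ terms using the flow equations (ii)--(iii), and bound the $\delta^i$ term above by $\sum_k\delta_k^i\xi_k^i$ using (iv) and (vi); this yields $v^i-\sum_k\delta_k^i\xi_k^i\le \sum_{(s,a)\in\mathcal{K}}\bigl(\prod_j x^j(s^j,a^j)\bigr)c^i(s,a)$, so the $i$th summand is nonnegative and $\zeta^*$ is a global minimum.

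For part (b), given a global minimum $\zeta^*$ with $\psi(\zeta^*)=0$, the nonnegativity of each summand just proved together with their sum being zero forces each to vanish, i.e. $\sum_{(s,a)\in\mathcal{K}}\bigl(\prod_j x^{j*}(s^j,a^j)\bigr)c^i(s,a)=v^{i*}-\sum_k\delta_k^{i*}\xi_k^i$ for every $i$. I would then define $f^{i*}$ from $x^{i*}$ by \eqref{IND_plstrg}, so that $x^{i*}$ is exactly the occupation measure of $f^{i*}$; constraint (iv) says the multi-strategy is feasible, and (i)--(iii),(v)--(vi) show $x^{i*}$ is feasible for \eqref{IND_brp} with $f^{-i*}$ fixed. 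Optimality of $x^{i*}$ in \eqref{IND_brp} follows by repeating the summing argument with an arbitrary feasible $x^i$ of \eqref{IND_brp} in place of $x^{i*}$: the resulting inequality $v^{i*}-\sum_k\delta_k^{i*}\xi_k^i\le \sum_{(s^i,a^i)}c^i(f^{-i*};s^i,a^i)x^i(s^i,a^i)$, combined with the equality above, shows the objective of \eqref{IND_brp} at $x^{i*}$ does not exceed its value at $x^i$. By the CMDP/LP correspondence $f^{i*}$ is therefore a best response against $f^{-i*}$ for each $i$, which is exactly the Nash equilibrium condition; the arbitrary tie-breaking when the denominator in \eqref{IND_plstrg} vanishes is handled as in \cite{Altman2}.

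I expect the only genuine subtlety, relative to the two-player single controller argument, to be controlling the multilinear term $\prod_{j=1}^N x^j(s^j,a^j)$ in the objective and in constraint (i). The key point is that independence of the state processes makes $\mathcal{K}$ the Cartesian product of the $\mathcal{K}^i$, so $\prod_{j=1}^N x^j(s^j,a^j)=x^i(s^i,a^i)\cdot\prod_{j\neq i}x^j(s^j,a^j)$ and the sum over $\mathcal{K}$ factors as a sum over $\mathcal{K}^i$ of $x^i(s^i,a^i)$ times a sum over $\mathcal{K}^{-i}$; the latter is precisely $c^i(f^{-i};s^i,a^i)$ (resp. $d^{i,k}(f^{-i};s^i,a^i)$) when the $x^j$, $j\neq i$, are the occupation measures of $f^{-i}$. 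Thus from player $i$'s vantage point the other players' variables act as fixed coefficients and the decomposition into $N$ independent primal--dual LP pairs goes through; I would make sure to record that this factorization is what is being used both in establishing $\psi(\zeta)\ge 0$ and in reading the best response LP \eqref{IND_brp} off constraint (i).
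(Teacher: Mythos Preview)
Your proposal is correct and follows essentially the same route as the paper's own proof: build the occupation measures $x^{i*}$ from the equilibrium strategies, invoke the CMDP/LP correspondence (Proposition~3.1(ii) of \cite{Altman3}) and strong duality to get $\psi(\zeta^*)=0$, establish $\psi(\zeta)\ge 0$ on the feasible set by the ``multiply constraint~(i) by $x^i(s^i,a^i)$ and sum over $\mathcal{K}^i$'' manipulation, and reverse the argument for part~(b). Your explicit remark on the factorization $\prod_{j=1}^N x^j(s^j,a^j)=x^i(s^i,a^i)\prod_{j\neq i}x^j(s^j,a^j)$ is a useful addition that the paper leaves implicit.
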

\begin{proof}
$(a)$ ~ Let $(f^{i*})_{i=1}^N$ be a Nash equilibrium of the constrained stochastic game $G_{ea}^c$. 
For each $i=1,2,\cdots,N$, we construct occupation measures $x^{i*}$ as in \eqref{IND_occ_measure}
corresponding to stationary strategies $f^{i*}$. Then, the constraints in $(ii), (iii)$ and $(v)$
are satisfied by $(x^{i*})_{i=1}^N$. The multi-strategy $(f^{i*})_{i=1}^N$ is feasible because it is a Nash equilibrium, 
so the constraints in $(iv)$ are also satisfied by $(x^{i*})_{i=1}^N$. For each $i=1,2,\cdots N$, $f^{i*}$ is best response 
of player $i$ against fixed strategy $f^{-i*}$ of other players; so, $x^{i*}$ as constructed above will be optimal solution
of linear program \eqref{IND_brp} for this fixed $f^{-i*}$ from Proposition $3.1(ii)$ of \cite{Altman3}. From strong duality theorem \cite{Bertsimas}, \cite{Bazaraa} there exist optimal solution $(v^{i*}, u^{i*}, \delta^{i*})$ of \eqref{IND_dual_brp}
such that the constraints in $(i)$ and $(vi)$ of [MP3] are satisfied by
$(v^{i*},u^{i*},x^{i*},\delta^{i*})_{i=1}^N$ and objective function value of \eqref{IND_brp} and \eqref{IND_dual_brp} are same.
In other words we have a point 
$\zeta^{*T}=\left(v^{i*},(u^{i*})^T,(x^{i*})^T,(\delta^{i*})^T\right)_{i=1}^N$ which 
is feasible for [MP3] and    
\[
 \sum_{(s,a)\in \mathcal{K}}\left(\prod_{j=1}^N x^{j*}(s^j,a^j)\right)c^i(s,a) = v^{i*}-\sum_{k=1}^{n_i}\delta_k^{i*}\xi_k^i,\;\;\forall\; i=1,2,\cdots,N.
\]
 From the construction of 
the objective function, $\psi(\zeta^*)=0$. 

Let $\zeta$ be any feasible point of [MP3]. 
For each $i=1,2,\cdots,N$, multiply
each constraint in $(i)$ of [MP3] corresponding to  
pair $(s^i,a^i)\in \mathcal{K}^i$ by 
$x^i(s^i,a^i)$,  add over all $(s^i,a^i)\in \mathcal{K}^i$ and by then using the constraints $(ii)$-$(vi)$ we have
\begin{equation}\label{IND_fes}
\sum_{(s,a)\in \mathcal{K}}\left(\prod_{j=1}^N x^{j}(s^j,a^j)\right)c^i(s,a)\geq v^{i}-\sum_{k=1}^{n_i}\delta_k^i\xi_k^i,\;\;\forall\; i=1,2,\cdots,N.
\end{equation}
From \eqref{IND_fes} we have $\psi(\zeta)\geq 0$ for all feasible points $\zeta$ of [MP3]. 
Thus $\zeta^*$ is a global minimum of the [MP3].
 
$(b)$ ~ Let $\zeta^*$ be a global minimum of [MP3] such that $\psi(\zeta^*)=0$. As $\zeta^*$ is 
a feasible point of [MP3] then \eqref{IND_fes} will also hold for $\zeta^*$, i.e.,
\[
 \sum_{(s,a)\in \mathcal{K}}\left(\prod_{j=1}^N x^{j*}(s^j,a^j)\right)c^i(s,a) \geq v^{i*}-\sum_{k=1}^{n_i}\delta_k^{i*}\xi_k^i,\;\;\forall\; i=1,2,\cdots,N.
\]
From above we see that all $N$ terms of the objective function are non-negative at $\zeta^*$. But at $\zeta^*$ the objective function value is zero which means that all the terms are individually zero, i.e., 
\begin{equation}\label{IND_NASH_eq} 
 \sum_{(s,a)\in \mathcal{K}}\left(\prod_{j=1}^N x^{j*}(s^j,a^j)\right)c^i(s,a) = v^{i*}-\sum_{k=1}^{n_i}\delta_k^{i*}\xi_k^i, \;\;\forall\; i=1,2,\cdots,N.
\end{equation}
Fix $\zeta^*$ and for each $i=1,2,\cdots,N$, multiply each constraint in $(i)$ corresponding to pair $(s^i,a^i)\in \mathcal{K}^i$ by 
$x^{i}(s^i,a^i)$ and add over all $(s^i,a^i)\in \mathcal{K}^i$ and by using the constraints 
$(ii)$-$(vi)$ and \eqref{IND_NASH_eq} we have
for each $i=1,2,\cdots,N$
\begin{align*}
\sum_{(s,a)\in\mathcal{K}}\left(\prod_{j=1}^N x^{j*}(s^j,a^j)\right)c^i(s,a)\leq \sum_{(s,a)\in\mathcal{K}}x^i(s^i,a^i)\left(\prod_{j=1;j\neq i}^N x^{j*}(s^j,a^j)\right)c^i(s,a)
\end{align*}
for all $i$-feasible $(x^i,x^{-i*})$ . In other words we can say that for each $i=1,2,\cdots,N$ 
\begin{equation*}\label{IND_NE}
 C_{ea}^i(f^*)\leq C_{ea}^i(f^i,f^{-i*}),\;\;\forall\; i\mbox{-feasible}\; (f^i,f^{-i*}).
\end{equation*}  
That is $(f^{i*})_{i=1}^N$ is Nash equilibrium of the constrained stochastic game 
$G_{ea}^c$ where 
\begin{gather*}
 f^{i*}(s^i,a^i)=\frac{x^{i*}(s^i,a^i)}{\sum_{a^i\in A^i(s^i)}x^{i*}(s^i,a^i)}
\end{gather*}
for all  $s^i\in S^i$, $a^i\in A^i(s^i)$, $i=1,2,\cdots,N$ whenever the denominator is non-zero 
(when it is zero $f^{i*}(s^i)$ is chosen arbitrarily from $\wp(A^i(s^i))$).

\end{proof}

\begin{remark}
 It is easy to see that [MP3] is also a non-convex constrained optimization problem.
\end{remark}

\subsubsection{Special cases}
We consider two special cases. First, we consider two player nonzero sum constrained stochastic game as defined in Section \ref{IND_model}, where, the constraints of both the players are decoupled. Next, we consider two player zero sum game as considered in \cite{Miller}.
\subsubsection*{(i) The case of two player game with decoupled constraints}
Here we consider the situation where there are only two players and the constraints of each player 
do not depend on the strategies of the other player.
This is possible when immediate costs of each player 
which correspond to his constraints do not depend 
on the state and actions of the other player, i.e., 
$d^{i,k}(s^1,s^2,a^1,a^2)=d^{i,k}(s^i,a^i)$  for all $s^i\in S^i$, $a^i\in A^i(s^i)$, $k=1,2,\cdots,n_i, i=1,2$.
We see that the mathematical program [MP3] reduces to a quadratic
program [QP3] as given below
{\allowdisplaybreaks
\begin{align*}
&\textup{{[QP3]}} \ \ \min\sum_{i=1}^2\left[\sum_{(s^1,a^1,s^2,a^2)}\left(\prod_{j=1}^2 x^j(s^j,a^j)\right)c^i(s^1,s^2,a^1,a^2)
-\left(v^i-\sum_{k=1}^{n_i}\delta_k^i\xi_k^i\right)\right]\\
&\text{s.t.}\\
&(i) ~ v^1+u^1(s^1)\leq \sum_{(s^2,a^2)\in \mathcal{K}^2} c^1(s^1,s^2,a^1,a^2)x^2(s^2,a^2)
+\sum_{k=1}^{n_1}d^{1,k}(s^1,a^1)\delta_k^1\\
&\hspace{3cm}+\sum_{\bar{s}^1\in S^1}p^1(\bar{s}^1|s^1,a^1)u^1(\bar{s}^1), \;\; \forall\; s^1\in S^1,\; a^1\in A^1(s^1)\\
& (ii) ~v^2 + u^2(s^2)\leq\sum_{(s^1,a^1)\in \mathcal{K}^1} c^2(s^1,s^2,a^1,a^2)x^1(s^1,a^1)
+\sum_{k=1}^{n_2}d^{2,k}(s^2,a^2)\delta_k^2\\
&\hspace{3cm}+\sum_{\bar{s}^2\in S^2}p^2(\bar{s}^2|s^2,a^2)u^2(\bar{s}^2), \;\; \forall\; s^2\in S^2,\; a^2\in A^2(s^2)\\
&(iii) ~ \sum_{(s^i,a^i)\in \mathcal{K}^i}\left(\delta(s^i,\bar{s}^i)- p^i(\bar{s}^i|s^i,a^i)\right)x^i(s^i,a^i)= 0, \;\; \forall\;  \bar{s}^i\in S^i, i=1,2 \\
& (iv) ~ \sum_{(s^i,a^i)\in \mathcal{K}^i}x^i(s^i,a^i)=1, \; \; \forall\; i=1,2\\
&(v) ~ \sum_{(s^i,a^i)\in \mathcal{K}^i}d^{i,k}(s^i,a^i)x^i(s^i,a^i)\leq \xi_k^i, \;\; \forall \; k=1,2,\cdots,n_i,\; i=1,2\\
&(ix) ~ x^i(s^i,a^i)\geq 0,\; \; \forall\; s^i\in S^i, a^i\in A^i(s^i),\; i=1,2\\
&(x) ~ \delta_k^i\geq 0, \;\; \forall \; k=1,2,\cdots,n_i,\; i=1,2.
\end{align*}
}
 

\subsubsection*{(ii) Zero sum constrained stochastic game \cite{Miller}}
As a further special case of constrained stochastic game $G_{ea}^c$ we consider two player zero sum game 
with decoupled constraints \cite{Miller}. This class of games with both unichain and multichain structure on the state processes of both 
the players can be solved by linear programs \cite{Miller}.  For zero sum case
simply set $c^1(s^1,s^2,a^1,a^2)=-c^2(s^1,s^2,a^1,a^2)=c(s^1,s^2,a^1,a^2)$ for all $s^1\in S^1$, $s^2\in S^2$, $a^1\in A^1(s^1)$, 
$a^2\in A^2(s^2)$ then the quadratic program
[QP3] can be separated into a primal-dual pair of linear programs which are same as given in \cite{Miller} in unichain case.

\subsection{A numerical example}\label{IND_num}
In this section we give one numerical example of a two player game where
constraints of both the players are decoupled. We compute the Nash equilibrium of this game  
by solving quadratic program [QP3]. 
The components of the stochastic game are: 
\begin{enumerate}
\item The state space of player 1 and player 2 are $S^1=\{1,2\}$, $S^2=\{3,4\}$ respectively.
\item The action sets of player 1  are $A^1(s^1)=\{1,2\}$ for all $s^1\in S^1$ and action sets of player 2 are 
      $A^2(s^2)=\{1,2\}$ for all $s^2\in S^2$.
\item The immediate costs of both the players, which are involved in their expected average costs they want to minimize, 
are given in Tables \ref{IND_CT1},  \ref{IND_CT2},  \ref{IND_CT3} and \ref{IND_CT4}.
\begin{table}[ht]%
\caption{Immediate costs}
\centering
\subtable [$(s^1,s^2)=(1,3)$]{\label{IND_CT1}
\begin{tabular}{|c|c|c|}\hline
\backslashbox{$a^1$}{$a^2$} & $a^2=1$ & $a^2=2$ \\ \hline
$a^1=1$ & (2,3) &(3,1) \\ \hline
$a^1=2$ & (4,2) & (2,4) \\ \hline
\end{tabular}}
\hspace{1cm}
\subtable [$(s^1,s^2)=(1,4)$]{\label{IND_CT2}
\begin{tabular}{|c|c|c|}\hline
\backslashbox{$a^1$}{$a^2$} & $a^2=1$ & $a^2=2$ \\ \hline
$a^1=1$ & (5,2) &(3,4) \\ \hline
$a^1=2$ & (3,2) & (4,1) \\ \hline
\end{tabular}}
\hspace{1cm}
\subtable [$(s^1,s^2)=(2,3)$]{\label{IND_CT3}
\begin{tabular}{|c|c|c|}\hline
\backslashbox{$a^1$}{$a^2$} & $a^2=1$ & $a^2=2$ \\ \hline
$a^1=1$ & (3,5) &(4,6) \\ \hline
$a^1=2$ & (5,2) & (2,1) \\ \hline
\end{tabular}}
\hspace{1cm}
\subtable [$(s^1,s^2)=(2,4)$]{\label{IND_CT4}
\begin{tabular}{|c|c|c|}\hline
\backslashbox{$a^1$}{$a^2$} & $a^2=1$ & $a^2=2$ \\ \hline
$a^1=1$ & (4,5) &(3,1) \\ \hline
$a^1=2$ & (1,2) & (4,3) \\ \hline
\end{tabular}}
\end{table}
These tables summarize the immediate costs of both the players in all the possible states.  For example in Table \ref{IND_CT1}
the entry $(2,3)$ represent $2$ as immediate cost of player 1 when first player is in state $1$ and he chooses action $1$ and second player is in state $3$ and  chooses action $1$. Similar explanation is for $3$ and other entries in all the tables.  

\item The transition probabilities of first and second Markov chains (one for each player) are given in the Tables \ref{IND_TP1}
and \ref{IND_TP2} respectively. We can easily check that both the Markov chains are unichain. In first Markov chain 
state $1$ is transient for some strategies of player 1 and state $2$ is recurrent for every strategy $f^1$ of player 1. In the second Markov chain both the states $3$ and $4$ are recurrent for every strategy $f^2$ of player 2.  So, the assumption (A1) is satisfied.
\begin{table}[ht]%
\caption{Transition probabilities of both the Markov chains}
\centering
\subtable[$p^1(.|s^1,a^1)$]{\label{IND_TP1}
\begin{tabular}{|c|c|c|}\hline
 {} & $a^1=1$ & $a^1=2$ \\ \hline
$s^1=1$ & (0.5,0.5) & (0.33,0.67)\\ \hline 
$s^1=2$ & (1,0) & (0,1) \\ \hline
\end{tabular}}
\hspace{1cm}
\subtable[$p^2(.|s^2,a^2)$]{\label{IND_TP2}
\begin{tabular}{|c|c|c|}\hline
 {} & $a^2=1$ & $a^2=2$ \\ \hline
$s^2=3$ & (0.67,0.33) & (0.4,0.6)\\ \hline 
$s^2=4$ & (0.25,0.75) & (1,0) \\ \hline
\end{tabular}}
\end{table}

\item Each player has one constraint. The immediate costs of both the players which are used in their
expected average constraints are given in Table \ref{IND_const1} and \ref{IND_const2}.
\begin{table}[ht]%
\caption{Immediate costs defining constraints}
\centering
\subtable[$d^1(s^1,a^1)$]{\label{IND_const1}
\begin{tabular}{|c|c|c|}\hline
{} & $a^1=1$ & $a^1=2$ \\ \hline
$s^1=1$ & 7 & 4\\ \hline
$s^1=2$ & 2 & 5\\ \hline 
\end{tabular}}
\hspace{1cm}
\subtable[$d^2(s^2,a^2)$]{\label{IND_const2}
\begin{tabular}{|c|c|c|}\hline
{} & $a^2=1$ & $a^2=2$ \\ \hline
$s^2=3$ & 4 & 3\\ \hline
$s^2=4$ & 3 & 5\\ \hline 
\end{tabular}}
\end{table} 
\item The bounds defining the constraints are $\xi^1=5$, $\xi^2=3.5$.
\end{enumerate}
We solve the quadratic program [QP3], corresponding to the above data, by using MATLAB and obtain   
\begin{gather*}
 \zeta^*=(1.2941,0,0,1.7059,-0.5882,0.5882,0,0,0,1,0,0.2941,0.7059,0,0,0).
\end{gather*}
Note that at $\zeta^*$ the objective function value is zero and hence it is the global minimum of  
of [QP3]. We have $x^{1*}=(0,0,0,1)$ and $x^{2*}=(0,0.2941,0.7059,0)$ then from Theorem \ref{IND_Main_thm} $(b)$ the Nash equilibrium 
$(f^{1*},f^{2*})$ of constrained stochastic game $G_{ea}^c$, where
\[
f^{1*}=((\alpha,1-\alpha), (0,1)) \; \; \mbox{for all} \;  \alpha\in [0,1] 
\]
\[
 f^{2*}=((0,1),(1,0))
\]
Note that under $f^{1*}$ player 1 can use any randomized strategy at state 1 which comes from the fact that state 1 is transient under $f^{1*}$.
The costs of both the players at Nash equilibrium $(f^{1*},f^{2*})$ are 
\begin{gather*}
C_{ea}^1(f^{1*},f^{2*})= 1.2941\\
C_{ea}^2(f^{1*},f^{2*})=1.7059.
\end{gather*}

\bibliographystyle{elsart-num}
\bibliography{BibliographyExample}

\appendix
\section*{Appendix A}

\textbf{A single mathematical program for average and discounted cost criteria model}

The mathematical programs [MP1] and [MP2] that characterize the stationary Nash equilibria of single controller constrained stochastic games with  average and discounted cost criteria respectively can be recovered from one mathematical program [MP4] given below.
{\allowdisplaybreaks
\begin{align*}
& \textup{{[MP4]}}\quad \min_{\eta}\left[\left(f^T\boldsymbol{C}^1x-\left(\textbf{1}^T z -(\delta^1)^T\xi^1\right)\right)
+\left(f^T\boldsymbol{C}^2x-\left(v+(1-\beta)\gamma^T u-(\delta^2)^T\xi^2\right)\right)\right] \\
\text{s.t.}\\
& (i) ~ v+ u(s)\leq \left[(f(s))^T\boldsymbol{C}^2(s)\right]_{a^2}
+\sum_{l=1}^{n_2}\delta^2_l \left[(f(s))^T\boldsymbol{D}^{2,l}(s)\right]_{a^2}\\
&\hspace{3cm}+\beta\sum_{s'\in S} p(s'|s,a^2)u(s'), \; \; \forall \;\; s\in S,\; a^2\in A^2(s)\\
& (ii) ~ z(s)\leq \left[\boldsymbol{C}^1(s)x(s)\right]_{a^1}+\sum_{k=1}^{n_1}\delta_k^1 d^{1,k}_{sub}(s,a^1),\; \; \forall \;\; s\in S,\; a^1\in A^1(s)\\
& (iii)~\sum_{(s,a^2)\in \kappa^2}\left[\delta(s,s')-\beta p(s'|s,a^2)\right]x(s,a^2)=(1-\beta)\gamma(s'), \;\;\forall \;\; s'\in S\\
& (iv) ~ \sum_{(s,a^2)\in \kappa^2}x(s,a^2)=1\\
& (v) ~ \sum_{(s,a^1)\in \kappa^1}d^{1,k}_{sub}(s,a^1)f(s,a^1)\leq \xi_k^1, \;\;\forall \;\; k=1,2,\cdots,n_1\\
& (vi)  ~ \sum_{s\in S} (f(s))^T\boldsymbol{D}^{2,l}(s)x(s)\leq \xi_l^2, \;\;\forall \;\; l=1,2,\cdots,n_2\\
& (vii) ~ \sum_{a^1\in A^1(s)} f(s,a^1)=1,  \;\;\forall \;\; s\in S\\
& (viii) ~ f(s,a^1)\geq 0, \;\;\forall \;\; s\in S,\;a^1\in A^1(s) \\ 
& (ix) ~   x(s,a^2)\geq 0, \;\; \forall \;\; s\in S,\;a^2\in A^2(s)\\
& (x)  ~ \delta_k^1\geq 0, \;\; \forall \;\; k=1,2,\cdots,n_1\\
& (xi) ~ \delta^2_l\geq 0,  \;\; \forall \;\; l=1,2,\cdots,n_2.
\end{align*}
}
The mathematical program [MP1] can be obtained by putting $\beta=1$ in [MP4].
For discount factor $\beta\in [0,1)$ the constraint $(iv)$ of [MP4] is redundant because it can be obtained by summing $(iii)$ over all $s'\in S$ and hence the variable $v$ is also redundant. So, by removing constraint $(iv)$ and variable $v$ from [MP4] we obtain [MP2].

\end{document}